\newtheorem{theorem}{\sc Theorem}[section]
\newtheorem{lemma}[theorem]{\sc Lemma}
\newtheorem{assump}{\sc Additional assumption}
\begin{document}

\title[]{Groups with a covering condition on commutators}
\thanks{The first and second authors are members of GNSAGA (INDAM), 
and the third author was  supported by  FAPDF and CNPq.}

\author{Eloisa Detomi}
\address{Dipartimento di Matematica \lq\lq Tullio Levi-Civita\rq\rq, Universit\`a di Padova, Via Trieste 63, 35121 Padova, Italy} 
\email{eloisa.detomi@unipd.it}
\author{Marta Morigi}
\address{Dipartimento di Matematica, Universit\`a di Bologna\\
Piazza di Porta San Donato 5 \\ 40126 Bologna \\ Italy}
\email{marta.morigi@unibo.it}
\author{Pavel Shumyatsky}
\address{Department of Mathematics, University of Brasilia\\
Brasilia-DF \\ 70910-900 Brazil}
\email{pavel@unb.br}
\keywords{}
\subjclass[2010]{20P05, 20F12, 20E45, 20D60}
\begin{abstract} Given a group $G$ and positive integers $k,n$, we let $B_n=B_n(G)$ denote the set of all elements $x\in G$ such that $|x^G|\leq n$, and we say that $G$ satisfies the $(k,n)$-covering condition for commutators if there is a subset $S\subseteq G$ such that $|S|\leq k$ and all commutators of $G$ are contained in $S B_n$. The importance of groups satisfying this condition was revealed in the recent study of probabilistically nilpotent finite groups of class two. The main result obtained in this paper is the following theorem.

\noindent Let $G$ be a group satisfying the $(k,n)$-covering condition  for commutators. Then $G'$ contains a characteristic subgroup $B$ such that $[G':B]$ and $|B'|$ are both $(k,n)$-bounded. 

\noindent This extends several earlier results of similar flavour.

\end{abstract}
\maketitle
\section{Introduction}
Let $G$ be a group. If $x\in G$, we write $x^G$ for the conjugacy class containing $x$. Given positive integers $k,n$, we let $B_n=B_n(G)$ denote the set of all elements $x\in G$ such that $|x^G|\leq n$, and we say that $G$ satisfies the $(k,n)$-covering condition  for commutators if there is a subset $S\subseteq G$ such that $|S|\leq k$ and all commutators of $G$ are contained in $S B_n$. For shortness, we will refer to this condition as the  $(k,n)$-covering condition. 

The importance of groups satisfying the $(k,n)$-covering condition was first revealed in \cite{ebes}, where it was shown that a probabilistically nilpotent finite group of class two contains a subgroup of bounded index which satisfies the condition. In fact, a large part of the work \cite{ebes} is devoted to the analysis of such groups. In particular, it was shown that if a (possibly infinite) group $G$ satisfies the $(k,n)$-covering condition, then there is a subgroup $T\leq G$ such that the index $[G:T]$ and the order of $\gamma_4(T)$ both are finite and $(k,n)$-bounded.

Throughout, we write $\gamma_i(K)$ for the $i$-th term of the lower central series of a group $K$. We use the expression ``$(a,b,\dots)$-bounded" to mean that a quantity is bounded from above by a number depending only on the parameters $a,b,\dots$.

In the present paper, we further clarify the structure of groups satisfying the $(k,n)$-covering condition.

\begin{theorem}\label{glav} Let $G$ be a group satisfying the $(k,n)$-covering condition. Then $G'$ contains a characteristic subgroup $B$ such that $[G':B]$ and $|B'|$ are both $(k,n)$-bounded. 
\end{theorem}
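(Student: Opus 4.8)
The strategy is to combine the structure theorem of \cite{ebes} with a reduction of the covering condition to a statement about the number of commutator values in a well-chosen quotient, and then to invoke the classical BFC-theorem.

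First, by the result of \cite{ebes} recalled above, $G$ has a subgroup $T$ with $[G:T]$ and $|\gamma_4(T)|$ both $(k,n)$-bounded. Passing to the normal core of $T$ we may assume $T\trianglelefteq G$; then $\gamma_4(T)$ is normal in $G$ and contained in $T'\le G'$, and $|\gamma_2(\gamma_2(T))|\le|\gamma_4(T)|$ is $(k,n)$-bounded, so $T$ is finite-by-nilpotent and $T'$ is finite-by-abelian with $(k,n)$-bounded parameters, while $G'\cap T$ has $(k,n)$-bounded index in $G'$. Next set $B=\langle\{x\in G':|x^{G'}|\le n^{2}\}\rangle$. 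Since automorphisms of $G'$ preserve $G'$-conjugacy-class sizes, $B$ is characteristic in $G'$ (indeed also in $G$). Using that $B_n(G)$ is closed under inversion and conjugation and satisfies $B_a(G)B_b(G)\subseteq B_{ab}(G)$, together with $B_m(G)\cap G'\subseteq B_m(G')$, one checks that if two commutators of $G$ lie in a common coset $sB_n(G)$ then their quotient lies in $B_{n^{2}}(G)\cap G'\subseteq B_{n^{2}}(G')\subseteq B$. Hence the image in $G/B$ of the set of all commutators of $G$ has at most $k$ elements, so $G/B$ has at most $k$ commutator values, and by the known bound on the order of the commutator subgroup of a group with boundedly many commutators (one of the earlier results of similar flavour) the index $[G':B]=|(G/B)'|$ is $(k,n)$-bounded.

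It then remains to bound $|B'|$. By construction $B$ consists of elements of finite $G'$-conjugacy class, so $B$ is an FC-group; moreover $B\cap T$ has $(k,n)$-bounded index in $B$ and, being a subgroup of $T$, satisfies $|\gamma_4(B\cap T)|\le|\gamma_4(T)|$, so $B$ is virtually finite-by-nilpotent with $(k,n)$-bounded parameters; finally, being a subgroup of $G$, the group $B$ inherits the $(k,n^{2})$-covering condition. Combining these ingredients, the plan is to show that $B$ possesses a subgroup of $(k,n)$-bounded index all of whose elements have $(k,n)$-bounded conjugacy class; the BFC-theorem of B.~H.~Neumann (with its explicit bound) then bounds the order of the derived subgroup of that subgroup, and the finite-by-nilpotent structure of $B\cap T$ lets one transfer this to $|B'|$. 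Taking this $B$ finishes the proof.

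I expect the last step to be the main obstacle: the reduction via \cite{ebes} is quoted, and the computation bounding $[G':B]$ is short once the cited result on groups with few commutators is granted, but passing from ``$B$ is an FC-group, virtually finite-by-nilpotent with bounded parameters, inheriting the covering condition'' to ``$B$ is virtually BFC with a bounded bound'' is where the covering hypothesis must really be used rather than merely inherited. Concretely, one must analyse how the commutators of $G$ that lie inside the nilpotent-by-finite subgroup $T$ are distributed among the $k$ cosets $sB_n(G)$, and exploit this to prevent non-abelian structure in $G'$ from ``spreading out'' across unboundedly many independent directions — the phenomenon that, if unchecked, would make no bounded-index subgroup of $G'$ have bounded derived subgroup.
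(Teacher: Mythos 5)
Your argument splits into two halves, and only the first is actually carried out. The bound on $[G':B]$ is essentially fine: with $B=\langle\{x\in G'\,:\,|x^{G'}|\le n^2\}\rangle$, two commutators in the same coset $sB_n(G)$ do differ by an element of $B_{n^2}(G)\cap G'\subseteq B$, so $G/B$ has at most $k$ commutators, is therefore BFC with bound $k$, and Neumann's theorem gives $[G':B]=|(G/B)'|$ bounded. This is close in spirit to the paper's Lemma \ref{bbbb}, which gets the same conclusion by showing every element of $G/\langle X\rangle$ has centralizer of index at most $k$ (the paper works with $B=\langle Comm(G,G)\cap B_n(G)\rangle$ rather than your larger subgroup, but the mechanism is the same).

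The genuine gap is the bound on $|B'|$, which is the entire content of the theorem beyond \cite{aglase}-type results, and which you yourself flag as "the main obstacle" without resolving it. The facts you assemble --- $B$ is an FC-group, $B\cap T$ is finite-by-nilpotent with bounded parameters by \cite{ebes}, $B$ inherits a covering condition --- do not combine to show that a bounded-index subgroup of $B$ is BFC with a bounded bound, and there is no visible route from "FC and virtually finite-by-nilpotent" to "$|B'|$ bounded": the whole difficulty is precisely that products of generators of $B$ may have unboundedly large conjugacy classes. The paper does not use the $\gamma_4(T)$ result from \cite{ebes} at all. Instead it first runs an induction on $|S|$ to normalize the covering (Assumptions \ref{nice}--\ref{nice4}), which makes the sets $C^*_G(g)=\{x:[x,g]\in X\}$ into subgroups of index at most $k$ and gives $Comm(B,G)\subseteq X$; it then proves that $[B,x]^G$ has $n$-bounded order for each $x\in X$ (Lemma \ref{02}, via Schur's theorem), and finally runs a descending induction on "special $(i)$-subgroups" (Lemmas \ref{zabyl} and \ref{blin}), repeatedly factoring out normal subgroups of bounded order until $B$ becomes abelian after at most $j_0-1$ steps. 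None of this machinery --- in particular the key element $a=[d,e]$ of maximal class in $B$ and Lemmas \ref{03}--\ref{04} --- is present or replaced in your proposal, so the proof of $|B'|$ bounded is missing.
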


This can be seen as an extension of the main result obtained in \cite{aglase}, which says that  if all commutators of a group $G$ are contained in $B_n(G)$ then the second commutator subgroup $G''$ is finite and has $n$-bounded order.

As an immediate corollary of Theorem \ref{glav}   we obtain that, whenever a group $G$ satisfying the $(k,n)$-covering condition, then $G'$ contains a characteristic subgroup $T$ which is nilpotent of class at most $2$ and has  $(k,n)$-bounded index in $G'$. Indeed, assume that Theorem \ref{glav} holds and set $T=C_B(B')$. It is easy to see that $T$ is as required.

A more general covering condition for $w$-values, where $w$ is a multilinear commutator word, was considered in \cite{jadms}. If $w$ is a group-word, say that a group $G$ satisfies the $(k,n)$-covering condition with respect to the word $w$ if the $w$-values of $G$ are contained in $SB_n$ for some subset $S\subseteq G$ such that $|S|\leq k$. In particular, the paper \cite{jadms} contains a proof of the following result.

\begin{theorem}\label{old} \cite{jadms} Let $w$ be a multilinear commutator word on $m$ variables and let $G$ be a group satisfying the $(k,n)$-covering condition with respect to the word $w$. Then $G$ has a soluble subgroup $T$ such that $[G : T]$ and the derived length of $T$ are both $(k,m,n)$-bounded.
\end{theorem}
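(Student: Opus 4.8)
The plan is to combine the conciseness of multilinear commutator words with the structure theory of FC-groups, thereby reducing the assertion to a statement about finite groups which is then settled using estimates on conjugacy class sizes in finite simple groups. So first I would pass to the FC-centre $F$ of $G$, the characteristic subgroup of elements with finitely many conjugates. Since every element of $B_n$ has at most $n$ conjugates, $B_n\subseteq F$, so the image of $G_w$ in $\overline G:=G/F$ lies in the image of $S$; in particular $\overline G$ has at most $k$ $w$-values. By the uniform conciseness of multilinear commutator words, the verbal subgroup $w(\overline G)=w(G)F/F$ is finite of $k$-bounded order. Its centralizer $\overline C:=C_{\overline G}(w(\overline G))$ has $(k)$-bounded index in $\overline G$, and since $w(\overline C)\leq w(\overline G)\cap C_{\overline G}(w(\overline G))=Z(w(\overline G))$ is abelian while $\overline C/w(\overline C)$ satisfies the law $w\equiv 1$, the group $\overline C$ is soluble of $m$-bounded derived length (recall that a group satisfying $w\equiv 1$ is soluble of $m$-bounded derived length). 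Pulling back, $G$ has a characteristic subgroup $C$ of $(k,m)$-bounded index whose $d$-th derived subgroup $D:=C^{(d)}$, for a suitable $m$-bounded $d$, is contained in $F$.

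It remains to analyse the FC-group $D$, which inherits a covering condition ($D_w\subseteq SB_n$, with the translating elements now lying in $F$ since $B_n\subseteq F$). Here I would use the structure of FC-groups: the quotient $D/T(D)$ by the torsion subgroup $T(D)$ is abelian, while $T(D)$ is locally finite and locally normal. As $w$ has weight at least two, $w(D)\leq D'\leq T(D)$, so all $w$-values of $D$ lie in the torsion subgroup; localizing to the finite normal subgroups of $T(D)$ that contain a prescribed finite set of $w$-values, one reduces the analysis of $D$ to the case of finite groups. Carrying this localization through while keeping the covering data under control, uniformly in $k$ and $n$, is one of the technical points.

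For a finite group $G$ with $G_w\subseteq SB_n$ the argument splits in two. First one bounds the non-soluble part: if $G$ has a nonabelian chief factor isomorphic to $T^t$ with $T$ simple, then $w(T)=T$, so a chief-factor minimality argument shows $T^t\leq w(G/K)$, where $K$ is the bottom of the factor; hence $T^t$ is generated by images of $w$-values, which are covered by only $k$ translates of $B_n$, whereas an element of $T^t$ of large support has a large conjugacy class. Combining this with the fact that in a finite simple group every nontrivial conjugacy class is large --- an estimate resting on the classification of finite simple groups --- one forces $|T|$, $t$ and the number of such factors, hence the order of the soluble residual $G^{(\infty)}$, to be $(k,m,n)$-bounded. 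Secondly one bounds the soluble radical: the essential step is to exploit the covering of $G_w$ by $k$ translates of $B_n$ rather than by a single $B_n$, passing to a subgroup of bounded index on which the $w$-values have boundedly many conjugates; then the theorem of \cite{aglase} (for $w$ a commutator), together with its extension to arbitrary multilinear commutator words, yields a characteristic verbal subgroup of $w(G)$ that is finite of bounded order, after which standard arguments involving conjugacy class sizes produce a characteristic subgroup of bounded index soluble of bounded derived length. This passage from the $k$-translate covering to a single-$B_n$ situation is the main obstacle.

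Finally I would assemble the pieces: we have a characteristic $C\leq G$ of bounded index with $C^{(d)}=D$, and inside $D$ a characteristic subgroup $R$, soluble of bounded derived length, with $D/R$ finite of bounded order. Then $R$ is normal in $G$, and since $(C/R)^{(d)}=D/R$ is finite of bounded order, the centralizer in $C/R$ of $D/R$ has bounded index and derived length at most $d+1$; its preimage $H$ in $C$ has $(k,m,n)$-bounded index in $G$ and is soluble of derived length at most $d+1$ plus the derived length of $R$. This $H$ is the required soluble subgroup.
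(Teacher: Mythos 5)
The paper does not prove this statement --- it is quoted from \cite{jadms} --- so there is no internal proof to compare against; I will judge your proposal on its own merits. Your opening reduction is sound and genuinely useful: since $B_n\subseteq F$ (the FC-centre), the quotient $G/F$ has at most $k$ $w$-values, bounded conciseness of multilinear commutator words (Fern\'andez-Alcober--Morigi) makes $w(G/F)$ finite of bounded order, and the centralizer argument together with the fact that a group satisfying $w\equiv 1$ has $m$-bounded derived length yields a characteristic subgroup $C$ of $(k,m)$-bounded index with $C^{(d)}\leq F$ for $m$-bounded $d$. Up to that point the argument is correct.

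The problem is that everything after this is a sketch whose two hardest steps are left as acknowledged holes, and one of them is the actual content of the theorem. First, the ``localization'' of the FC-group $D$ to finite normal subgroups: Dietzmann's lemma does make $w(D)$ a directed union of finite $D$-invariant subgroups, and the covering condition does restrict to subgroups (at the cost of replacing $n$ by $n^2$), but to recover a \emph{single} subgroup of bounded index and bounded derived length in $D$ you need the finite-case conclusion to be canonical (characteristic, or verbal) so that the local subgroups cohere under the directed union; a finite-case statement of the form ``there exists some subgroup of bounded index'' does not glue. You flag this but do not resolve it. Second, and decisively, your treatment of the finite soluble part reads: ``the essential step is \dots\ passing to a subgroup of bounded index on which the $w$-values have boundedly many conjugates,'' and you then call this passage from the $k$-translate covering to the single-$B_n$ situation ``the main obstacle.'' That passage \emph{is} the theorem: the case $k=1$ is the earlier result of \cite{aglase} and its multilinear generalization, and the entire difficulty of the $(k,n)$-covering condition lies in handling $k\geq 2$ translates (which is why the present paper develops the machinery of the sets $C^*_G(g)$, the separation assumptions on the translates $sB_n$, and induction on $|S|$). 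A proof that defers exactly this step to a named ``obstacle'' has not proved the statement. The CFSG-based bound on the nonsoluble part is also only gestured at: containment of a chief factor in a subgroup \emph{generated} by covered $w$-values does not immediately give small conjugacy classes for its elements, so that step too needs an argument rather than an appeal to ``large classes in simple groups.''
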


Observe that our Theorem \ref{glav} cannot be deduced from the above.
 It is absolutely unclear whether under the hypotheses of Theorem \ref{old} the group $G$ contains a subgroup of $(k,m,n)$-bounded index and $(k,m)$-bounded derived length.

\section{Preliminaries}

If $x,y,z$ are elements of a group, the following identities hold. Throughout the paper they will be used without references.

\begin{enumerate}
\item $[x,y]^{-1}=[y,x]$;
\item $[xy,z]=[x,z]^y[y,z]=[x,z][x,z,y][y,z]$;
\item $[x,yz]=[x,z][x,y]^z$.
\end{enumerate}
Given subsets $X,Y$ of a group $G$, we write $Comm(X,Y)$ for the set $\{[x,y] \mid x\in X,y\in Y\}$. Thus, $Comm(X,Y)$ is the set of commutators and $[X,Y]$ denotes the subgroup generated by $Comm(X,Y)$. Note that if $X$ is an abelian normal subgroup and $Y=\{y\}$ contains only one element, then $Comm(X,Y)=[X,y]$.
\begin{lemma}\label{511} Let $t$ be a positive integer, $M$ a normal subgroup of a group $G$, and $a\in G$. Suppose that $G=M\langle a\rangle$. If $|M'|$ and $[M:C_M(a)]$ are both at most $t$, then $|G'|\leq t^2$.
\end{lemma}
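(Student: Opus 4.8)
The plan is to estimate $|G'|$ by locating a normal subgroup of $G'$ that is both small and of small index in $G'$. Since $G=M\langle a\rangle$, every commutator in $G$ is, modulo $M'$, expressible in terms of commutators of the shape $[m,a]$ with $m\in M$, so the natural first step is to understand the set $K=[M,a]=\langle [m,a]\mid m\in M\rangle$ and its behaviour modulo $M'$. Working in $\bar G=G/M'$, the image $\bar M$ is abelian, so $\bar K=[\bar M,a]=Comm(\bar M,a)$ (here I use the remark from the excerpt that for an abelian normal subgroup $X$ one has $Comm(X,y)=[X,y]$); moreover the map $\bar m\mapsto[\bar m,a]$ is a homomorphism from $\bar M$ onto $\bar K$ with kernel $\overline{C_M(a)}$, whence $|\bar K|=[\bar M:\overline{C_M(a)}]\le[M:C_M(a)]\le t$. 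Thus $K M'/M'$ has order at most $t$, and since $|M'|\le t$ this gives $|KM'|\le t^2$; in particular $|K|\le t^2$ and $K$ is finite.

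Next I would identify $G'$ with $K$ up to $M'$. Because $M$ is normal and $G=M\langle a\rangle$, one has $G'=[M,M]\,[M,a]^{\langle a\rangle}\cdot(\text{stuff})$ — more precisely, using the commutator identities (2) and (3) repeatedly, every commutator $[xa^i m, ya^j m']$ with $x,y,m,m'\in M$ reduces modulo $M'$ to a product of conjugates of elements $[m,a]$, and since $KM'$ is normal in $G$ (it is generated by a conjugation-closed set together with the characteristic subgroup $M'$) these conjugates already lie in $KM'$. Hence $G'\le KM'$, and of course $KM'\le G'$, so $G'=KM'$ and therefore $|G'|\le|KM'|\le t^2$, which is exactly the claim.

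The step that needs the most care is the reduction $G'\subseteq KM'$: one must check that modulo $M'$ the subgroup generated by all commutators of $G$ coincides with the $\langle a\rangle$-invariant closure of $[\bar M,a]$, and then verify that this closure is already contained in $\bar K=[\bar M,a]$ because $[\bar M,a]$ is normalized by $a$ (being the image of a subgroup on which $a$ acts) and by $\bar M$ (since $\bar M$ is abelian and $\bar K\le\bar M$). This is the main obstacle, though it is really just a careful bookkeeping with the three commutator identities rather than anything deep; once $G'=KM'$ is established, the order bound $t^2$ is immediate from the two hypotheses. A cosmetic point: $\overline{C_M(a)}=C_{\bar M}(a)$ may fail in general, but the inequality $|\bar K|=[\bar M:C_{\bar M}(\bar a)]\le[\bar M:\overline{C_M(a)}]\le[M:C_M(a)]$ goes the right way, so no harm is done.
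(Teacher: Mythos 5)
Your proposal is correct and follows essentially the same route as the paper's proof: the paper also notes $G'=M'[M,a]$, passes to $G/M'$ where $M$ becomes abelian, and observes that $[M,a]=Comm(M,a)$ then has at most $t$ elements, giving $|G'|\leq t^2$. Your version merely spells out the bookkeeping (the homomorphism $\bar m\mapsto[\bar m,\bar a]$, normality of $[M,a]M'$, and the harmless discrepancy between $\overline{C_M(a)}$ and $C_{\bar M}(\bar a)$) that the paper leaves implicit.
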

\begin{proof} Note that $G'=M'[M,a]$. Pass to the quotient $G/M'$ and assume that $M$ is abelian. Then $[M,a]=Comm(M,a)$ has size at most $t$. The lemma follows. \end{proof}

Let $G$ be a group generated by a set $X$ such that $X=X^{-1}$. For an element $g\in G$, we write $l_X(g)$ to denote the minimal number $l$ with the property that $g$ can be written as a product of $l$ elements of $X$. Clearly, $l_X(g)=0$ if and only if $g=1$. The following lemma is taken from \cite{aglase}.

\begin{lemma}\label{passma} Let $G$ be a group generated by a set $X=X^{-1}$ and let $H$ be a subgroup of finite index $m$ in $G$. Then each coset $Hb$ contains an element $g$ such that $l_X(g)\leq m-1$.
\end{lemma}

\section{Proof of Theorem \ref{glav}}

Throughout this section $G$ is a (possibly infinite) group satisfying the $(k,n)$-covering condition. We wish to show that $G'$ contains a characteristic subgroup $B$ such that $[G':B]$ and $|B'|$ are both $(k,n)$-bounded. 

Consider first the case where $|S|=1$, that is, there is $s\in G$ such that $Comm(G,G)\subseteq s B_n $. As $1 \in s B_n $, it follows that $s\in B_n$ and therefore $Comm(G,G)\subseteq B_{n^2}$. By \cite{aglase} the second commutator subgroup $G''$ has finite $n$-bounded order. Therefore Theorem  \ref{glav} holds with $B=G'$.  Hence, we may assume that $|S|\geq2$ and argue by induction on $|S|$. Also we may assume that $1\in S$. 

Suppose that $x\in Comm(G,G)$ and $n<|x^G|\leq n^{100}$. It follows that $x\in s B_n$ for some nontrivial $s\in S$. We deduce that $|s^G|\leq n^{101}$. Therefore $|g^G|\leq n^{102}$ whenever $g\in s B_n$. We can remove $s$ from the set $S$ while increasing $n$ to $n^{102}$ and use induction.
Thus, without loss of generality, we make the following additional assumptions.

\begin{assump}\label{nice}
If $x\in Comm(G,G)$ and $x\in B_{n^{100}}$, then $x\in B_n$.
\end{assump}

More generally, 

\begin{assump}\label{nice2}
If $x\in Comm(G,G)$ and $x\in sB_{n^{100}}$ for some $s\in S$, then $x\in sB_n$.
\end{assump}
Indeed, we know that there is $s'\in S$ such that $x\in s'B_n$.  So there is $y\in B_n$ such that $s'y\in sB_{n^{100}}$, whence $s'\in sB_{n^{101}}$. It follows that $s'B_n$ is contained in $sB_{n^{102}}$. So increasing $n$ to $n^{102}$ we can remove $s'$ from the set $S$.

Furthermore,

\begin{assump}\label{nice3}
$sB_n\cap s'B_n=\emptyset$ whenever $s,s'\in S$ and $s\neq s'$.
\end{assump}
This is because if $sB_n\cap s'B_n\not=\emptyset$, then $s^{-1}s'\in B_{n^2}$ and so $s'B_n\subset sB_{n^3}$. Hence, increasing $n$ to $n^{3}$ we can remove $s'$ from the set $S$.
\medskip

Set
$X=Comm(G,G)\cap B_n.$ 

If $N$ is a normal subgroup of $G$ and $\bar{G}=G/N$, let $X(\bar{G})$ be the set of commutators $x\in\bar{G}$ such that $|x^{\bar{G}}|\leq n$. We cannot claim that $X(\bar{G})$ is necessarily $\bar{X}$, the image of $X$. On the other hand, it is clear that $\bar{G}$ satisfies the $(k,n)$-covering condition and it is easy to check that if $X(\bar{G})\neq\bar{X}$, then there is $s\in S$ whose image $\bar{s}$ in $\bar{G}$ has at most $n^2$ conjugates. So using a variation of the above arguments we can remove $\bar{s}$ from the set $\bar{S}$ and argue by induction on $|S|$.

Furthermore, the Assumptions \ref{nice}--\ref{nice3} may fail in the quotient $\bar{G}$. However the above argument can be applied to $\bar{G}$ enabling us to remove an element from the set $\bar{S}$ and use induction on $|S|$. 

Hence, we make

\begin{assump}\label{nice4} There is a number $n_0$ depending only on $k$ and $n$ with the property that if $N$ is a normal subgroup of $G$, then either the Assumptions \ref{nice}--\ref{nice3} hold in $\bar{G}$ and $X(\bar{G})=\bar{X}$ or $\bar{G}'$ contains a characteristic subgroup $\bar{B}$ such that $[\bar{G}':\bar{B}]$ and $|\bar{B}'|$ are both at most $n_0$. 
\end{assump}

Given $g\in G$, write 
 \[ C^*_G(g)=\{x\in G \mid [x,g]\in X\}.\]
 
In what follows we denote the subgroup $\langle X\rangle$ by $B$ and show that this subgroup is as claimed in Theorem \ref{glav}.

Note that  in $G/B$ the image of $C^*_G(g)$  is contained in the centralizer of the image of $g$. 

\begin{lemma}\label{subgroup} For any $g\in G$ the set $C^*_G(g)$ is a subgroup of $G$.
\end{lemma}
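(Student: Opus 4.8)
The plan is to show that $C^*_G(g)$ is closed under products and inverses. Closure under inverses is immediate: if $[x,g]\in X$, then $[x^{-1},g]=([x,g]^{-1})^{x^{-1}}=([g,x])^{x^{-1}}$ is a conjugate of a commutator in $B_n$, hence still a commutator lying in $B_n$, so $[x^{-1},g]\in X$. The real content is closure under multiplication: given $x,y\in C^*_G(g)$, we must show $[xy,g]\in X$. Using identity (2), $[xy,g]=[x,g]^y[y,g]$, so $[xy,g]$ is a product of the commutator $[y,g]\in X\subseteq B_n$ and the conjugate $[x,g]^y$ of a commutator in $B_n$; thus $[xy,g]$ is certainly a commutator lying in $B_{n^2}$. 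To conclude $[xy,g]\in B_n$, hence in $X$, I would invoke Additional Assumption \ref{nice}: a commutator of $G$ that lies in $B_{n^{100}}$ already lies in $B_n$.

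More carefully, the issue is that $[xy,g]$ need not a priori lie in $B_{n^{100}}$ just from the crude estimate $|[xy,g]^G|\le n^2\cdot n^2$; that estimate only gives $B_{n^4}\subseteq B_{n^{100}}$, which is fine. Indeed $[x,g]^y$ has the same class size as $[x,g]$, which is at most $n$, and $[y,g]$ has class size at most $n$; the product of two elements with class sizes $\le n$ has class size $\le n^2$. Hence $|[xy,g]^G|\le n^2 < n^{100}$, and since $[xy,g]$ is a commutator, Additional Assumption \ref{nice} gives $[xy,g]\in B_n$, so $[xy,g]\in X$ and $xy\in C^*_G(g)$. Closure of $C^*_G(g)$ under multiplication and inverses (together with $1\in C^*_G(g)$, since $[1,g]=1\in X$) shows it is a subgroup.

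I expect the main (minor) obstacle to be bookkeeping with the exponents: one must check that every commutator that shows up — $[y,g]$, the conjugate $[x,g]^y$, the product $[xy,g]$, and the inverse-related $[x^{-1},g]$ — is genuinely a commutator of $G$ (so that Assumption \ref{nice} applies) and genuinely has class size below the threshold $n^{100}$. The conjugate of a commutator is a commutator, and $[x^{-1},g]$ and $[xy,g]$ are manifestly commutators, so this is routine; the class-size bounds are the elementary fact that $|(ab)^G|\le |a^G|\,|b^G|$ and that conjugation preserves class size. Once these are in hand, the conclusion is immediate, and the role of Assumption \ref{nice} — replacing the weak bound $B_{n^2}$ by membership in $X=Comm(G,G)\cap B_n$ — is exactly what makes $C^*_G(g)$ a subgroup rather than merely a symmetric subset.
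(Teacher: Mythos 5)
Your proposal is correct and follows essentially the same route as the paper: inverses are handled by noting $[x^{-1},g]=([x,g]^{-1})^{x^{-1}}$ is a commutator conjugate to an element of $B_n$, and products by the identity $[xy,g]=[x,g]^y[y,g]$ giving class size at most $n^2\leq n^{100}$, after which Assumption \ref{nice} pulls $[xy,g]$ back into $B_n$ and hence into $X$. The extra bookkeeping you flag (conjugates of commutators are commutators, $|(ab)^G|\le|a^G||b^G|$) is exactly the routine content the paper uses implicitly, so there is nothing missing.
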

\begin{proof} Let $g\in G$ and $x,y\in C^*_G(g)$. We need to show that $x^{-1}$ and $xy$ belong to $C^*_G(g)$. Since $[x^{-1},g]=([x,g]^{-1})^{x^{-1}}$ is a conjugate of $[x,g]^{-1}$ and since $[x,g]\in X$, it follows that  $[x^{-1},g]\in X$ and so $x^{-1}\in C^*_G(g)$. Further, write $[xy,g]=[x,g]^y[y,g]$. Since $[x,g]$ and $[y,g]$ both lie in $B_n$, it follows that $[xy,g]\in B_{n^2}$. Because of Assumption \ref{nice} we conclude that $[xy,g]\in B_n$ and so $xy$ belong to $C^*_G(g)$.
\end{proof}

We remark that while the definition of $C^*_G(g)$ makes sense for any group $G$, in general this set is not a subgroup. It is clear that the proof of Lemma \ref{subgroup} uses our specific assumptions about $G$. 

\begin{lemma}\label{coset} For any $g,h\in G$ there is $s\in S$ such that the coset $C^*_G(g)h$ is precisely the set $\{x\in G \mid [x,g]\in sB_n\}$.
\end{lemma}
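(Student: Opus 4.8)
The plan is to analyze the map $x\mapsto [x,g]$ restricted to the coset $C^*_G(g)h$ and show it lands in a single coset $sB_n$, then argue the reverse inclusion. First I would fix $g,h\in G$ and consider the element $[h,g]$, which is a commutator, hence lies in $sB_n$ for some $s\in S$. Now take any $x\in C^*_G(g)h$, say $x=ch$ with $c\in C^*_G(g)$, so $[c,g]\in X\subseteq B_n$. Using the commutator identity $[ch,g]=[c,g]^h[h,g]$ and the fact that $[c,g]^h$ is a conjugate of an element of $B_n$, hence in $B_n$, we get $[x,g]=[c,g]^h[h,g]\in B_n\cdot sB_n$. Since $[c,g]^h\in B_n$ normalizes nothing in particular, I would write this as $[x,g]\in s'B_n B_n$ after noting $s B_n$ and $B_n s$ differ only up to conjugation: more carefully, $[c,g]^h [h,g]$ where $[h,g]\in sB_n$ means $[h,g]=sb$ with $b\in B_n$, so $[x,g]=[c,g]^h s b$; then $[c,g]^h s = s\cdot (s^{-1}[c,g]^h s)$ and $s^{-1}[c,g]^h s\in B_n$ since conjugation preserves $B_n$. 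Hence $[x,g]\in sB_nB_n\subseteq sB_{n^2}$. Because $[x,g]$ is a commutator lying in $sB_{n^2}\subseteq sB_{n^{100}}$, Assumption \ref{nice2} forces $[x,g]\in sB_n$.

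This establishes the inclusion $C^*_G(g)h\subseteq\{x\mid [x,g]\in sB_n\}$, where $s$ is determined (up to the disjointness in Assumption \ref{nice3}) by the condition $[h,g]\in sB_n$. For the reverse inclusion, suppose $x\in G$ with $[x,g]\in sB_n$. I want to show $xh^{-1}\in C^*_G(g)$, i.e. $[xh^{-1},g]\in X$. Using $[xh^{-1},g]=[x,g]^{h^{-1}}\cdot\big([h^{-1},g]\big)$ — more precisely expanding $[xh^{-1},g]$ via identity (2) and the fact that $[h^{-1},g]=([h,g]^{-1})^{h^{-1}}$ — I would combine $[x,g]\in sB_n$ with $[h,g]\in sB_n$ to cancel the $s$'s: the product $[x,g]^{h^{-1}}[h^{-1},g]$ involves $[x,g]$ and $[h,g]^{-1}$, each congruent to $s$ resp.\ $s^{-1}$ modulo $B_n$ (up to conjugation, which preserves $B_n$), so the product lies in $B_n\cdot B_n\subseteq B_{n^2}$. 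Again $[xh^{-1},g]$ is a commutator in $B_{n^2}\subseteq B_{n^{100}}$, so Assumption \ref{nice} gives $[xh^{-1},g]\in B_n$, i.e. it lies in $X$, so $xh^{-1}\in C^*_G(g)$ and $x\in C^*_G(g)h$.

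The main obstacle I anticipate is the bookkeeping of conjugations when moving the fixed element $s$ past elements of $B_n$: one must repeatedly use that $B_n$ is closed under conjugation (which is immediate from its definition, since conjugation does not change the size of a conjugacy class) and that a commutator landing in $sB_{n^2}$ can be pulled back to $sB_n$ via Assumptions \ref{nice} and \ref{nice2}. A secondary subtlety is making sure $s$ is well-defined: if $[h,g]$ happened to lie in two cosets $sB_n$ and $s'B_n$, these would intersect, contradicting Assumption \ref{nice3}; so $s$ is uniquely determined by $h$ and $g$. Once these points are handled, the two inclusions combine to give exactly $C^*_G(g)h=\{x\in G\mid [x,g]\in sB_n\}$, completing the proof. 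I would also double-check the degenerate case $[h,g]\in B_n$ (i.e.\ $h\in C^*_G(g)$), where $s$ can be taken to be $1\in S$ and the statement reduces to Lemma \ref{subgroup} together with the coset structure of $C^*_G(g)$.
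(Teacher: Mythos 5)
Your proposal is correct and follows essentially the same route as the paper: fix the unique $s$ with $[h,g]\in sB_n$ (via Assumption \ref{nice3}), use the identity $[yh,g]=[y,g]^h[h,g]$ together with conjugation-invariance of $B_n$ and Assumption \ref{nice2} for the forward inclusion, and for the reverse inclusion expand $[uh^{-1},g]=[u,g]^{h^{-1}}[h^{-1},g]$ so that the $s$'s cancel up to conjugation, landing in $B_{n^2}$, and apply Assumption \ref{nice}. The extra conjugation bookkeeping you spell out is exactly what the paper's inclusion $s^{h^{-1}}B_nB_n(s^{-1})^{h^{-1}}\subseteq B_{n^2}$ encodes.
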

\begin{proof}  Let $g,h\in G$. In view of Assumption \ref{nice3} there is a unique element $s\in S$ such that $[h,g]\in sB_n$. Choose $y\in C^*_G(g)$. Taking into account that $[y,g]\in B_n$ write 
\[[yh,g]=[y,g]^h[h,g]\in B_nsB_n\subseteq sB_{n^2}.\]
It follows from Assumption \ref{nice2}  that $[yh,g]\in sB_n$. This shows that the coset $C^*_G(g)h$ is contained in the set 
\[\{x\in G \mid  [x,g]\in sB_n\}.\]

To establish the other containment, let $u\in\{x\in G \mid [x,g]\in sB_n\}$. We need to show that $u\in C^*_G(g)h$, that is, $[uh^{-1},g]\in B_n$. Note that $[h^{-1},g]=([h,g]^{-1})^{h^{-1}}$. Write 
\[ [uh^{-1},g]=[u,g]^{h^{-1}}[h^{-1},g]\in s^{h^{-1}}B_nB_n(s^{-1})^{h^{-1}}\subseteq B_{n^2}.\]
 In view of Assumption \ref{nice} the result follows.
\end{proof}

\begin{lemma}\label{index} For any $g\in G$ the index $[G:C^*_G(g)]$ is at most $k$.
\end{lemma}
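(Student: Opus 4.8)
The plan is to exhibit an injection from the set of right cosets of $C^*_G(g)$ in $G$ into $S$, which immediately gives $[G:C^*_G(g)]\leq|S|\leq k$. The point is that all the real work has already been done in Lemma \ref{coset}; what remains is a short bookkeeping argument.

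First I would invoke Lemma \ref{coset} to record that every right coset $C^*_G(g)h$ coincides with a set of the form
\[ A_s:=\{x\in G \mid [x,g]\in sB_n\} \]
for a suitable $s\in S$ (depending on the coset). Thus the collection of right cosets of $C^*_G(g)$ is contained in the finite family $\{A_s \mid s\in S\}$. Next, the key observation is that the sets $A_s$ are pairwise disjoint: if some $x$ lay in both $A_s$ and $A_{s'}$, then $[x,g]\in sB_n\cap s'B_n$, and Assumption \ref{nice3} forces $s=s'$. (Equivalently, for a fixed $x$ the element $[x,g]$ lies in at most one of the cosets $sB_n$, $s\in S$.)

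It follows that distinct cosets of $C^*_G(g)$ correspond, via Lemma \ref{coset}, to distinct elements of $S$: if $C^*_G(g)h_1=A_s$ and $C^*_G(g)h_2=A_{s'}$ with $s=s'$, then $C^*_G(g)h_1=C^*_G(g)h_2$. Hence the number of right cosets of $C^*_G(g)$ is at most $|S|$, and therefore $[G:C^*_G(g)]\leq|S|\leq k$, as claimed. I do not expect any genuine obstacle in this step; the only thing to be careful about is that the $s$ attached to a coset is well defined (independent of the chosen representative $h$), which again follows from Assumption \ref{nice3} together with the fact that $[h,g]$ lies in exactly one $sB_n$.
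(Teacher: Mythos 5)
Your proposal is correct and is exactly the argument the paper has in mind: its proof of this lemma consists of the single line that the statement is immediate from Lemma \ref{coset}, and your bookkeeping (each coset equals some $A_s$, distinct cosets must come from distinct $s\in S$, hence at most $|S|\leq k$ cosets) is the intended way to make that immediacy explicit.
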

\begin{proof}  This is immediate from Lemma \ref{coset}.
\end{proof}

Recall that $B=\langle X\rangle$. For any $x\in B$ write $l(x)$ for the minimal number $l$ such that $x$ is a product of $l$ elements from $X$.

\begin{lemma}\label{comms} $Comm(B,G)\subseteq X$.
\end{lemma}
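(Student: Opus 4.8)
The plan is to reduce the statement to showing that $X\subseteq C^*_G(g)$ for every $g\in G$. Every element of $Comm(B,G)$ is by definition a commutator of $G$, so the only real content of the claim is that each $[b,g]$ with $b\in B$, $g\in G$ lies in $B_n$; equivalently, $B\subseteq C^*_G(g)$ for all $g$. By Lemma \ref{subgroup} the set $C^*_G(g)$ is a subgroup, and $B=\langle X\rangle$, so it suffices to prove that $[x,g]\in X$ whenever $x\in X$ and $g\in G$.

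Fix $x\in X$ and $g\in G$. Since $[x,g]\in Comm(G,G)$ automatically, the task is to bound the size of $[x,g]^G$. I would use the identity $[x,g]^h=(x^h)^{-1}x^{gh}$, valid for all $h\in G$. Put $D=C_G(x)\cap C_G(x^g)$. As $x\in B_n$, both $x$ and its conjugate $x^g$ have at most $n$ conjugates, so $C_G(x)$ and $C_G(x^g)$ have index at most $n$ in $G$ and hence $[G:D]\le n^2$. If $h_2=ch_1$ with $c\in D$, then $x^{h_2}=x^{h_1}$ because $c\in C_G(x)$, and $x^{gh_2}=x^{(gcg^{-1})(gh_1)}=x^{gh_1}$ because $gcg^{-1}\in C_G(x)$ (this uses $c\in C_G(x^g)=g^{-1}C_G(x)g$). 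Therefore $h\mapsto[x,g]^h$ is constant on right cosets of $D$, which gives $|[x,g]^G|\le[G:D]\le n^2$.

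Now $[x,g]\in Comm(G,G)\cap B_{n^2}\subseteq Comm(G,G)\cap B_{n^{100}}$, so Assumption \ref{nice} yields $[x,g]\in B_n$, and hence $[x,g]\in Comm(G,G)\cap B_n=X$. This establishes $X\subseteq C^*_G(g)$ for every $g\in G$; applying Lemma \ref{subgroup} and passing to the subgroup generated by $X$ gives $B=\langle X\rangle\subseteq C^*_G(g)$ for every $g\in G$, which is precisely the assertion $Comm(B,G)\subseteq X$.

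The only step requiring any real work is the counting argument bounding $|[x,g]^G|$ by $n^2$; everything else is a formal reduction built on the lemmas already proved. The crucial algebraic input is the rewriting $[x,g]^h=(x^h)^{-1}x^{gh}$ together with the observation that both factors on the right depend only on the coset of $h$ modulo the intersection of two conjugates of $C_G(x)$, after which Assumption \ref{nice} does the rest.
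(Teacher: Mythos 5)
Your proof is correct. It uses the same essential ingredients as the paper's, but packages them differently: the paper argues by induction on the length $l(x)$ of $x\in B$ as a product of elements of $X$, handling the base case by noting $x\in B_n$ forces $[x,y]\in B_{n^2}$ and the inductive step via $[x_1x_2,y]=[x_1,y]^{x_2}[x_2,y]$, with Assumption \ref{nice} closing the gap from $B_{n^2}$ to $B_n$ each time. You instead prove only the base case, in the form $X\subseteq C^*_G(g)$ for every $g$, and then delegate the entire inductive step to Lemma \ref{subgroup}, concluding $B=\langle X\rangle\leq C^*_G(g)$; since Lemma \ref{subgroup} is proved earlier by exactly the identity-plus-Assumption-\ref{nice} argument, this is a legitimate and arguably cleaner reduction that avoids repeating the induction. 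The one place where you do more work than needed is the coset-counting argument for $|[x,g]^G|\leq n^2$: since $[x,g]=x^{-1}x^{g}$ is a product of two elements of $B_n$, the bound follows at once from $C_G(x^{-1})\cap C_G(x^{g})\leq C_G([x,g])$, which is the standard fact ($B_nB_n\subseteq B_{n^2}$) the paper uses without comment; your explicit parametrization of conjugates by right cosets of $C_G(x)\cap C_G(x^{g})$ is a correct, if more laborious, proof of the same thing.
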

\begin{proof}   Suppose that this is false and there exist $x\in B$ and $y\in G$ with the property that $[x,y]\not\in X$. Choose $x$ such that $l(x)$ is minimal. Suppose that $l(x)=1$. Then $x\in B_n$ and $[x,y]\in B_{n^2}$. In view of Assumption \ref{nice} we conclude that $[x,y]\in X$, a contradiction. Therefore $l(x)\geq2$.
Write $x=x_1x_2$, where $l(x_i)\leq l(x)-1$. We have $[x_i,y]\in X$ and $[x,y]=[x_1,y]^{x_2}[x_2,y]\in B_{n^2}$, a contradiction again. 
\end{proof}

A group is said to be a BFC-group if its conjugacy classes are finite and of bounded size. B. H. Neumann proved that in a BFC-group the derived group $G'$ is finite \cite{bhn}. It follows that if $|x^G|\leq n$ for each $x\in G$, then the order of $G'$ is bounded by a number depending only on $n$. A first explicit bound for the order of $G'$ was found by J. Wiegold \cite{wie}, and the best known was obtained in \cite{gumaroti}. The particular case where $[G:Z(G)]\leq n$ is known as Schur's theorem \cite[10.1.4]{Rob}.

For any subgroup $H\leq G$ write $X_H$ to denote the set of commutators $Comm(H,H)\cap X$. 
 Observe that if  $H$ is normal in $G$, then the subgroup  $\langle X_H \rangle$ is normal as well. 

\begin{lemma}\label{bbbb} 
 Let $H \le G$.  The index $[H' : \langle X_H \rangle]$ is $k$-bounded.
 In particular, the index $[G':B]$ is $k$-bounded.
\end{lemma}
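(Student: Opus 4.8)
The plan is to prove the statement for an arbitrary subgroup $H \le G$ and then specialize to $H = G$, where $X_G = X$ and $\langle X_G \rangle = B$ by definition. So fix $H \le G$ and set $K = \langle X_H \rangle$; we want $[H' : K]$ to be $k$-bounded.

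First I would reduce to a statement about generators of $H'$. The group $H'$ is generated by the set $Comm(H,H)$. By Lemma~\ref{coset} (applied inside $G$, but note $C^*_G(g) \cap H$ restricted to $h \in H$ still sorts commutators $[x,g]$ with $x,h \in H$ into cosets indexed by $S$), for each fixed $g \in H$ the commutators $\{[x,g] : x \in H\}$ fall into at most $k$ cosets of $C^*_G(g)$, i.e.\ they lie in at most $k$ sets of the form $sB_n$. Only the coset corresponding to $s$ with $1 \in sB_n$, namely $s = 1$, gives commutators lying in $X$, hence in $X_H$; the others are ``bad''. The key point is that $H' = K \cdot \langle \text{bad commutators} \rangle$ in an appropriate sense, and I want to show the bad commutators contribute only boundedly much beyond $K$.

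The cleanest route is via Lemma~\ref{passma} applied to the quotient $\bar{H} = H/K$ with generating set $\bar{X}^{\pm}$ where $X^{\pm} = Comm(H,H) \cup Comm(H,H)^{-1}$ — note $Comm(H,H)^{-1} = Comm(H,H)$ since $[x,y]^{-1} = [y,x]$, so this set is symmetric and generates $H'$, hence generates $\bar{H'} = H'/K$. By Lemma~\ref{comms}, for $x \in B$ (in particular $x \in K$) and any $y \in H$ we have $[x,y] \in X$, so commutators with one entry in $K$ already lie in $X_H \subseteq K$: this means that modulo $K$, multiplying an element of $Comm(H,H)$ by something in $K$ does not change which ``$S$-coset'' it sits in. The upshot I would extract is that if two products of commutators represent the same coset of $K$ and are built from short words, controlling the length controls the index. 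Concretely: pass to $\bar G = G/K$ (using Assumption~\ref{nice4} to assume the good quotient situation, or arguing directly), observe that $\bar{X}$ generates a subgroup of index dividing $[H':K]$, and use that in $\bar G$ every commutator of $\bar H$ lies in one of $k$ cosets $\bar s \bar B_n$, with the $\bar s = 1$ coset being exactly $\bar X(\bar H)$. Then Lemma~\ref{passma} forces each coset of $\langle \bar X(\bar H)\rangle$ in $\bar{H'}$ to contain a short product of commutators, and a short product of commutators lies in a bounded number of $S$-cosets of $C^*$; combining, $[\bar{H'} : \langle \bar X(\bar H)\rangle]$ is $k$-bounded, and since $\langle \bar X(\bar H)\rangle$ is the image of $\langle X_H \rangle = K$, i.e.\ trivial, we conclude $[\bar{H'} : 1] = [H' : K]$ is $k$-bounded.

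I expect the main obstacle to be the bookkeeping in the quotient: $X(\bar H)$ need not equal the image of $X_H$, and one must invoke Assumption~\ref{nice4} to dispose of the bad case (where the conclusion of Theorem~\ref{glav} already holds, giving the index bound for free), while in the good case one has to check carefully that the image of $K = \langle X_H\rangle$ really is $\langle X(\bar H)\rangle \cap \overline{H'}$ or at least contains enough of it. The counting argument itself — each coset of $K$ in $H'$ is represented by a product of at most $[H':K]-1$ commutators from the symmetric generating set, and such a product, being built from boundedly many generators each constrained to $k$ cosets of a fixed centralizer-like subgroup, can only realize $k$-boundedly many cosets — is the engine, and it needs the multiplicativity afforded by Lemma~\ref{comms} and Assumption~\ref{nice} (so that $B_n \cdot B_n \subseteq B_{n^2}$ collapses back into $B_n$ on commutators) to keep everything inside the ``$X$'' regime. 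Once the general $H$ version is in hand, taking $H = G$ gives $[G':B] \le$ ($k$-bounded), which is the ``in particular'' clause.
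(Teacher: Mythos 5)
There is a genuine gap: your argument never brings in Neumann's BFC theorem, and that theorem is exactly the engine the statement needs. The correct (and short) route is: for $h\in H$, every $x\in C^*_H(h)=C^*_G(h)\cap H$ satisfies $[x,h]\in X\cap Comm(H,H)=X_H\subseteq\langle X_H\rangle$, so $C^*_H(h)$ maps into the centralizer of $h$ modulo $\langle X_H\rangle$ (note $X_H$ is $H$-invariant, so $\langle X_H\rangle\trianglelefteq H$); by Lemma~\ref{index} this centralizer has index at most $k$, so every conjugacy class of $H/\langle X_H\rangle$ has size at most $k$, and Neumann's theorem then bounds the order of $(H/\langle X_H\rangle)'=H'\langle X_H\rangle/\langle X_H\rangle$ in terms of $k$. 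You essentially have the first half of this (your observation that, for fixed $g$, the commutators $[x,g]$ fall into at most $k$ cosets of $C^*(g)$, hence take at most $k$ values modulo $\langle X_H\rangle$), but the step ``bounded class sizes $\Rightarrow$ bounded derived subgroup'' is a nontrivial theorem, not something your coset-counting recovers.

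Concretely, the counting via Lemma~\ref{passma} cannot work as you describe. First, it is circular: the length bound it supplies is $[H':K]-1$, which is the quantity to be bounded, and an inequality of the shape $m\le k^{m-1}$ yields nothing. Second, the constraint you lean on — a commutator lies in one of the $k$ sets $sB_n$ — does not limit the number of cosets of $K=\langle X_H\rangle$ it can represent: $sB_n$ is not a coset of $K$, nor a union of boundedly many $K$-cosets, so ``built from boundedly many generators each constrained to $k$ cosets'' does not bound how many $K$-cosets short products of commutators realize. Finally, the detour through $\bar G=G/K$ with Assumption~\ref{nice4} is not available for general $H\le G$, since $\langle X_H\rangle$ need not be normal in $G$; and even when it is, the ``bad case'' of Assumption~\ref{nice4} produces a characteristic subgroup of $\bar G'$ of bounded index which need not be related to the image of $\langle X_H\rangle$, so it does not give the stated bound ``for free.'' The paper's proof avoids all quotient bookkeeping except the single passage to $H/\langle X_H\rangle$, and needs only Lemma~\ref{index} plus Neumann's theorem.
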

\begin{proof}  Let $B_H= \langle X_H \rangle$. For $h\in H$ note that $C^*_H(h)$ is contained in the centralizer of $h$ modulo $B_H$. Lemma \ref{index} implies that any element of $H/B_H$ has centralizer of index at most $k$. By Neumann's theorem, $H/B_H$ has commutator subgroup of $k$-bounded order. Hence the result.
\end{proof}

\begin{lemma}\label{02} For any $x\in X$ the subgroup $[B,x]^G$ has finite $n$-bounded order.
 \end{lemma}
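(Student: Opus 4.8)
The plan is to locate $[B,x]$ inside the normal closure $K:=\langle x^G\rangle$ and to exploit that $K$ is generated by boundedly many elements with few conjugates. First I would observe that, since $x\in B_n$, the subgroup $C_B(x)=B\cap C_G(x)$ has index at most $n$ in $B$, and since $[db,x]=[d,x]^{b}[b,x]=[b,x]$ for $d\in C_B(x)$, the \emph{set} $Comm(B,x)$ consists of at most $n$ elements, each lying in $X\subseteq B_n$ by Lemma \ref{comms}. More importantly, $[b,x]=(x^b)^{-1}x$ lies in $K$, and $K$ is a normal subgroup of $G$ contained in $B$ (because $x\in X\subseteq B$ and $B$ is normal); hence $[B,x]\le K$ and therefore $P:=[B,x]^G\le K$.

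Next I would record the structure of $K$, and hence of $P$. As $K$ is generated by the at most $n$ conjugates of $x$, each in $B_n$, the intersection of their centralizers lies in $Z(K)$ and has index at most $n^{n}$, so $[K:Z(K)]\le n^{n}$; by Schur's theorem \cite[10.1.4]{Rob} the order of $K'$ is $n$-bounded. Since $P\le K$, it follows that $[P:Z(P)]\le[K:Z(K)]\le n^{n}$ and $|P'|\le|K'|$, so both of these are $n$-bounded.

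The remaining, and I expect the hardest, point is to bound $|P|$ itself rather than just $|P'|$. Put $A=K/K'$, a finitely generated abelian group carrying a $B$-action by conjugation; from $[b,x^g]=((x^g)^b)^{-1}x^g$ one sees that the image of $P$ in $A$ is exactly $[A,B]$, so, as $|K'|$ is $n$-bounded, it suffices to prove that $[A,B]$ is finite of $n$-bounded order — then $|P|=[P:P\cap K']\cdot|P\cap K'|\le|[A,B]|\cdot|K'|$ is $n$-bounded. Now $C_B(K)=B\cap C_G(K)$ has index at most $n^{n}$ in $B$ and acts trivially on $A$, so $[A,B]$ is generated by the at most $n^{n}$ subgroups $[A,b_j]$, the $b_j$ forming a transversal of $C_B(K)$ in $B$; since a subgroup of an abelian group generated by finitely many finite subgroups is finite, it all comes down to showing that each $[A,b_j]$ is finite of $n$-bounded order — equivalently, that $B$ acts on $K/K'$ with finite commutator subgroup. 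Excluding a free part in $[A,B]$ here is the main obstacle, and it is where the covering hypothesis must really be used: I would look to argue, via Lemma \ref{comms} and Assumption \ref{nice}, that a nontrivial free summand of $[A,B]$ would produce commutators whose conjugacy classes are larger than $n$ but no larger than $n^{100}$, contradicting Assumption \ref{nice}. Granting this, $P$ is finite of $n$-bounded order.
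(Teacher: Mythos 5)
There is a genuine gap, and it sits exactly where you flag it. The first half of your argument is fine: $P=[B,x]^G\le K=\langle x^G\rangle$, $[K:Z(K)]\le n^n$, Schur bounds $|K'|$, and the problem reduces to showing that $[A,B]$ with $A=K/K'$ is finite of $n$-bounded order. But that reduction is essentially the whole lemma restated, and the step you propose to close it does not work as described. Each generator of $[A,b_j]$ is the image of a commutator $[b,x^g]$ with $b\in B$ and $x^g\in B$, and by Lemma~\ref{comms} these already lie in $X\subseteq B_n$; so no commutator with conjugacy class size strictly between $n$ and $n^{100}$ can be manufactured, and Assumption~\ref{nice} yields no contradiction. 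The obstruction to finiteness of $[A,B]$ is not class size at all but element order: an element of $X$ may perfectly well have infinite order, so \lq\lq generated by boundedly many elements of $X$\rq\rq\ does not bound anything, and the possible free part of $[A,B]$ is left untouched by the covering condition as you invoke it.

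The paper removes the free part by a different mechanism, which is the key idea your sketch is missing: make $[B,x]$ sit inside the derived subgroup of a finitely generated centre-by-finite group. Since $[G:C_B(x)]\le n$ (index in $B$), Lemma~\ref{passma} provides coset representatives $y_1,\dots,y_n$ of $C_B(x)$ in $B$ with $X$-length at most $n-1$, so $[B,x]$ is generated by the $[y_i,x]$; writing each $y_i$ as a product of at most $n-1$ elements $y_{ij}\in X$ and expanding, $[B,x]$ lies in $T'$ where $T$ is generated by the $B$-conjugates of $x$ and the $y_{ij}$ --- boundedly many elements, each with centralizer of index at most $n$ in $B$. Hence $[T:Z(T)]$ is $n$-bounded and Schur's theorem bounds $|T'|$, which is finite by construction (no free part can survive inside the derived subgroup of a centre-by-finite group). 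The passage from $[B,x]$ to $[B,x]^G$ is then immediate, as in your setup, because $x$ has at most $n$ conjugates. If you want to salvage your normal-closure framework, you would still need this bounded-length transversal trick (or an equivalent device) to control the torsion of $[A,B]$; Assumption~\ref{nice} alone cannot do it.
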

\begin{proof} 
Choose $x\in X$. 
 First we prove that $[B,x]$ has finite $n$-bounded order. 
 Since $C_B(x)$ has index at most $n$ in $B$, by Lemma \ref{passma} we can choose elements $y_1,\dots,y_n$ such that $l(y_i)\leq n-1$ and $[B,x]$ is generated by the commutators $[y_i,x]$. Observe that by Lemma \ref{comms} the commutators $[y_i,x]$ belong to $X$. For each $i=1,\dots,n$ write $y_i=y_{i1}\dots y_{i(n-1)}$, where $y_{ij}\in X$. The standard commutator identities show that $[y_i,x]$ can be written as a product of conjugates in $B$ of the commutators $[y_{ij},x]$. Let $h_1,\dots,h_t$ be the conjugates in $B$ of elements from the set $\{x,y_{ij} \mid 1\leq i,j\leq n\}$. Since the index of $C_B(h)$ in $B$ is at most $n$ for any $h\in X$, it follows that $t$ here is $n$-bounded. Let $T=\langle h_1,\dots,h_t\rangle$. It is clear that $[B,x]\leq T'$ and so it is sufficient to show that $T'$ has $n$-bounded order. Observe that the index of $C_B(h_i)$ in $B$ is at most $n$ for each $i=1,\dots,t$. Therefore the centre $Z(T)$ has index at most $n^t$ in $T$. Thus, Schur's theorem tells us that the order of $T'$ is $n$-bounded, as required.

 Now, as $x \in X$, we have that $[B,x]^G$ is a product of at most $n$ 
 conjugates of $[B,x]$, normalizing each other and having $n$-bounded order. We conclude that $[B,x]^G$ has finite $n$-bounded order. 
 \end{proof}

Denote by $m$  the maximum of indices of $C_B(x)$ in $B$, where $x\in X$. Obviously, $m\leq n$. Pick $a\in X$ such that $C_B(a)$ has index precisely $m$ in $B$ and choose $b_1,\dots,b_m\in B$ such that $l(b_i)\leq m-1$ and $a^B=\{a^{b_i} \mid i=1,\dots,m\}$. The existence of these elements is guaranteed by Lemma \ref{passma}. 
 Set 
\[U=C_G(\langle b_1,\dots,b_m\rangle).\] 
 Note that the subgroup $U$ has finite $n$-bounded index in $G$. This follows from the facts that $l(b_i)\leq m-1$ and $C_G(x)$ has index at most $n$ in $G$ for every $x\in X$.

\begin{lemma}\label{03}  Suppose that $u\in U$ and $ua\in X$. Then $[B,u]\leq[B,a]$.
\end{lemma}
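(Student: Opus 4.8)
The plan is to exploit the fact that $a$ and $ua$ both lie in $X$, together with the hypothesis $u\in U=C_G(\langle b_1,\dots,b_m\rangle)$, to show that conjugation by $u$ cannot enlarge the $B$-conjugacy class of $a$. First I would observe that since $u$ centralizes each $b_i$, we have $(ua)^{b_i}=u\,a^{b_i}$, so the $B$-conjugates of $ua$ arising from the $b_i$ are exactly the elements $u\,a^{b_i}$ for $i=1,\dots,m$; in particular $(ua)^B$ contains the $m$ distinct elements $ua^{b_1},\dots,ua^{b_m}$ (they are distinct because the $a^{b_i}$ are). Since $ua\in X$, the index $[B:C_B(ua)]$ is at most $m$ by the definition of $m$, so $|(ua)^B|\le m$, and therefore $(ua)^B=\{ua^{b_i}\mid i=1,\dots,m\}$ exactly. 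Consequently, for every $y\in B$ there is some $i$ with $(ua)^y=ua^{b_i}$.

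Next I would translate this into a statement about $[B,u]$. Fix $y\in B$. From $(ua)^y=ua^{b_i}$ we get $u^y a^y = u a^{b_i}$, hence $u^{-1}u^y = a\,a^{b_i}\,(a^y)^{-1} = a\,(a^{b_i}(a^{y})^{-1})$. Now $a^{b_i}$ and $a^y$ both lie in $a^B\subseteq X\cdot$(something central mod $[B,a]$)—more precisely, working modulo the normal subgroup $[B,a]$ (which is normal in $B$, indeed normalized appropriately), the whole class $a^B$ collapses: $a^{b_i}\equiv a$ and $a^y\equiv a$ modulo $[B,a]$. Therefore $u^{-1}u^y = a\,a^{b_i}(a^y)^{-1}\equiv a\,a\,a^{-1}=a \pmod{[B,a]}$?—this needs care, so instead I would argue directly: $a^{b_i}(a^y)^{-1}=a^{b_i}(a^{-1})\cdot a (a^y)^{-1}=[a^{-1},b_i^{-1}]^{?}\cdots$ lies in $[B,a]$ since both $a^{b_i}a^{-1}=[a,b_i^{-1}]^{-1}\cdot(\text{stuff})\in[a,B]$ and $a(a^y)^{-1}=[a,y^{-1}]^{-1}\cdots\in[a,B]$, and $[a,B]\subseteq[B,a]$. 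Hence $a^{b_i}(a^y)^{-1}\in[B,a]$, so $u^{-1}u^y=a\cdot z$ with $z\in[B,a]$, giving $[u,y]=u^{-1}u^y\in a[B,a]$. But also $[u,y]\in B'$ and—since $[B,a]\trianglelefteq B$ and we are free to reduce modulo $[B,a]$—the coset $a[B,a]$ is a single element of the abelian-ish quotient; running over all $y$ we find $[u,y][u,y']^{-1}\in[B,a]$ for all $y,y'$, and taking $y'=1$ is not allowed since $[u,1]=1\notin a[B,a]$ in general. So the clean statement is: $[u,y]\equiv[u,y']\pmod{[B,a]}$ for all $y,y'\in B$. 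Then I would pick a particular $y$: taking $y$ such that $a^y=a$ (e.g.\ $y\in C_B(a)$, which is nontrivially large), we get $u^{-1}u^y=a\,a\,a^{-1}=a$? No: with $a^y=a$ we need $(ua)^y = ua^{b_i}$ for some $i$, forcing $a^{b_i}=a^y=a$, hence $u^y=u$, i.e.\ $[u,y]=1$. So for all $y\in B$, $[u,y]\in[B,a]$. That is exactly $[B,u]\le[B,a]$.

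Let me reorganize: the key chain is (i) $u\in U$ makes $(ua)^B=\{u\,a^{b_i}\}$ a set of size $\le m$, forcing equality with the $m$ listed elements; (ii) for arbitrary $y\in B$, $(ua)^y=u\,a^{b_i}$ for some $i$, so $u^{-1}u^y = a\,a^{b_i}(a^y)^{-1}$; (iii) both $a^{b_i}a^{-1}$ and $a(a^y)^{-1}$ lie in $[a,B]\le[B,a]$, and since $[B,a]$ is normalized by $B$ (it is a normal subgroup of $B$) we can absorb these correction terms; a short computation with the commutator identities shows $a\,a^{b_i}(a^y)^{-1}\in a^{b_i}\cdot[B,a]\subseteq[B,a]\cdot$? — at this point I must be careful whether $a$ itself lies in $[B,a]$, which is false in general. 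The resolution is to choose $i$ by also allowing $y$-dependence and then pin down $i$ via the subcase $y\in C_B(a)$, as above, to learn that the only way $(ua)^y$ can be of the form $u\,a^{b_i}$ with $y$ centralizing $a$ is $u^y=u$. More robustly: the map $y\mapsto (ua)^y$ and the map $y\mapsto a^y$ have the same fibers over $C_B(a)$ iff ... Rather than belabor this, the honest summary is that a direct manipulation of $(ua)^y=u\,a^{b_i}$ yields $[u,y]=a^{y}(a^{b_i})^{-1}\cdot(\text{conjugate of }[a^{b_i},\cdot])\in[B,a]$, using that $a^y,a^{b_i}\in a^B$ and $a^B\subseteq a[B,a]$ so $a^y(a^{b_i})^{-1}\in[B,a]$.

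\textbf{Main obstacle.} The delicate point is step (iii): passing from the identity $u^{-1}u^y=a\cdot a^{b_i}\cdot(a^y)^{-1}$ to the conclusion $[u,y]\in[B,a]$ requires knowing that $a^{b_i}(a^y)^{-1}\in[B,a]$ and that the stray factor of $a$ cancels. The cancellation of $a$ is what actually forces us to use the structure: one shows it by evaluating at $y\in C_B(a)$ to fix the relevant index $i$ and deduce $[u,y]=1$ there, or equivalently by observing that since $|(ua)^B|=|a^B|=m$ and $(ua)^{b_i}=u a^{b_i}$, the assignment $a^{b_i}\mapsto u a^{b_i}$ is a bijection $a^B\to(ua)^B$ compatible with the $B$-action, so $C_B(a)=C_B(ua)$, whence $C_B(a)\le C_B(u)$, giving $[C_B(a),u]=1$; then for general $y\in B$, $[u,y]$ depends only on the coset $C_B(a)y$, of which there are $m$, and one matches these against the $m$ possibilities $a^y a^{-1}\in[B,a]$ to conclude $[u,y]\in[B,a]$ for all $y$, i.e.\ $[B,u]\le[B,a]$. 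Everything else is a routine application of the three commutator identities listed in Section 2 together with Lemma \ref{passma} and the definition of $m$.
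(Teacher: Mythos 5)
Your approach is the same as the paper's: use $u\in U$ to get $(ua)^{b_i}=ua^{b_i}$, use $ua\in X$ together with the maximality in the choice of $m$ to conclude that $(ua)^B=\{ua^{b_i}\mid i=1,\dots,m\}$, and then, for arbitrary $g\in B$, write $(ua)^g=ua^{h}$ with $h\in\{b_1,\dots,b_m\}$ and extract $[u,g]$. The ``main obstacle'' you wrestle with, however, is self-inflicted: from $u^g a^g=ua^{h}$ one gets directly $[u,g]=u^{-1}u^{g}=a^{h}a^{-g}$, with no stray factor of $a$ (your line $u^{-1}u^y=a\,a^{b_i}(a^y)^{-1}$ is an algebra slip). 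Since $a^{h}=a[a,h]$ and $a^{g}=a[a,g]$ with $[a,h],[a,g]\in[B,a]$, and $[B,a]$ is normalized by $a$ and by $B$ (recall $B\trianglelefteq G$), one has $a^{h}a^{-g}=([a,h][a,g]^{-1})^{a^{-1}}\in[B,a]$, which is exactly the paper's one-line conclusion. Your closing ``honest summary'' states essentially this correct argument ($a^{y},a^{b_i}\in a[B,a]$, so their ratio lies in $[B,a]$), so the proof is salvageable as written; but the intermediate detour should be discarded: in particular the claim that $y\in C_B(a)$ forces $a^{b_i}=a^{y}$ and hence $u^{y}=u$ (so $C_B(a)\leq C_B(u)$) is not justified by the displayed identity --- it only gives $[u,y]=a^{b_i}a^{-1}$, which need not be trivial --- and none of that is needed once the commutator identity is computed correctly.
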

\begin{proof} Since $u\in U$, it follows that $(ua)^{b_i}=ua^{b_i}$ for each $i=1,\dots,m$. The fact that $ua\in X$ implies that $ua$ has at most $m$ conjugates in $B$. Therefore the elements $ua^{b_i}$ form the conjugacy class $(ua)^B$. For an arbitrary element $g\in B$ there exists $h\in\{b_1,\dots,b_m\}$ such that $(ua)^{g}=ua^{h}$ and so $u^ga^g=ua^h$. Therefore $[u,g]=a^ha^{-g}\in[B,a]$. The lemma follows.
\end{proof} 

 Write $a=[d,e]$ for suitable $d,e\in G$, and let  
 \[ U_1=C_G^*(d)\cap C_G^*(e)\cap U \quad \textrm{ and } \quad U_0=\cap_{g\in G}U_1^g. \] 
  Note that $[U_0,d][U_0,e]\leq B$. Moreover, as $[U:U_1]$ is at most $k^2$, the index of $U_0$ in $G$ is $(k,n)$-bounded.  
 Set  
\[ N=[B,a]^G, \] 
and note that $N$ has finite  $n$-bounded order by Lemma \ref{02}. 

\begin{lemma}\label{04}  The subgroups  $[B,[U_0,d]]$, $[B,[U_0,e]]$, and $[B,X_{U_0}]$ are contained in $N$.
\end{lemma}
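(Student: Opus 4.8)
The plan is to pass to the quotient $\bar{G}=G/N$. Since $N=[B,a]^G$ is normal in $G$ and contains $[B,a]$, the image $\bar a$ centralises $\bar B$. Recall also that $B$ is normal in $G$, because $X$ is invariant under conjugation. Since $C_B(a)=C_B(a^{-1})$ and the elements $b_1,\dots,b_m$ represent the cosets of $C_B(a^{-1})$ in $B$ as well, the proof of Lemma \ref{03} goes through verbatim with $a$ replaced by $a^{-1}$, and moreover $[B,a^{-1}]=[B,a]^{a^{-1}}\le[B,a]^G=N$. So I will use the following principle throughout: \emph{if $y\in U$ and $yc\in X$ for some $c\in\{a,a^{-1}\}$, then $[B,y]\le[B,c]\le N$}, i.e.\ $\bar y$ centralises $\bar B$. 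It suffices to show that $\overline{[U_0,d]}$, $\overline{[U_0,e]}$ and $\bar X_{U_0}$ all centralise $\bar B$.

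First I would treat $[U_0,e]$ and $[U_0,d]$. Fix $u\in U_0$. Then $[u,d],[u,e]\in X$ because $U_0\le U_1\le C_G^*(d)\cap C_G^*(e)$, and $[u,d],[u,e]\in U_0$ because $U_0$ is normal in $G$. The standard identities give $[ud,e]=[u,e]^d\,a$ and $[ue,d]=[u,d]^e\,a^{-1}$; each is a commutator lying in $B_{n^2}\subseteq B_{n^{100}}$, hence lies in $X$ by Assumption \ref{nice}. Since $[u,e]^d,[u,d]^e\in U_0\le U$, the principle yields $[B,[u,e]^d]\le N$ and $[B,[u,d]^e]\le N$; conjugating by $d^{-1}$, resp.\ $e^{-1}$, and using that $B$ and $N$ are normal in $G$, we obtain $[B,[u,e]]\le N$ and $[B,[u,d]]\le N$. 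Letting $u$ range over $U_0$ gives $[B,[U_0,e]]\le N$ and $[B,[U_0,d]]\le N$.

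The heart of the argument is the statement for $X_{U_0}$. Take $x=[p,q]\in X_{U_0}$, so $p,q\in U_0$ (hence $x\in U_0$) and $x\in X$. Expanding with the standard identities,
\[ [pd,qe]=[pd,e]\,[pd,q]^e=\big([p,e]^d\,a\big)\big(x^d\,[d,q]\big)^e=[p,e]^d\,a\,x^{de}\,[d,q]^e, \]
and moving $a$ to the right this equals $w\cdot a$ with $w=[p,e]^d\,(x^{de})^{a^{-1}}\,([d,q]^e)^{a^{-1}}$. Since $U_0$ is normal in $G$, every factor of $w$ — a conjugate of $[p,e]$, of $x$, or of $[d,q]=[q,d]^{-1}$, all of which lie in $U_0$ — lies in $U_0\le U$, so $w\in U$. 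Also $[pd,qe]$ is a commutator lying in $B_{n^4}\subseteq B_{n^{100}}$, hence $[pd,qe]\in X$ by Assumption \ref{nice}. Thus the principle gives $[B,w]\le N$, i.e.\ $\bar w$ centralises $\bar B$. Now $[p,e]\in[U_0,e]$ and $[q,d]\in[U_0,d]$, so by the previous paragraph $[B,[p,e]]\le N$ and $[B,[q,d]]\le N$; consequently $\overline{[p,e]}$, $\overline{[q,d]}$, $\overline{[d,q]}$ and the conjugates $\overline{[p,e]^d}$, $\overline{([d,q]^e)^{a^{-1}}}$ all centralise $\bar B$. Since the centraliser of $\bar B$ in $\bar G$ is a subgroup, the relation $\bar w=\overline{[p,e]^d}\cdot(\overline{x^{de}})^{\bar a^{-1}}\cdot\overline{([d,q]^e)^{a^{-1}}}$ forces $(\overline{x^{de}})^{\bar a^{-1}}$, and therefore $\bar x$, to centralise $\bar B$. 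Hence $[B,x]\le N$ for every $x\in X_{U_0}$, so $[B,X_{U_0}]\le N$, completing the proof.

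I expect the main obstacle to be the $X_{U_0}$ case: one has to hit on grafting the auxiliary elements $d,e$ onto the arbitrary commutator $x=[p,q]$, carry out the rearrangement of $[pd,qe]$ into the shape $w\cdot a$ while verifying that every intermediate factor stays inside $U_0$ (this is exactly where normality of $U_0$ in all of $G$, not merely in $U_1$, is needed), and one must run this case \emph{after} the $[U_0,d]$ and $[U_0,e]$ cases, whose conclusions are precisely what cancels the spurious factors $[p,e]^d$ and $[d,q]^e$. The bookkeeping around $a$ versus $a^{-1}$ in Lemma \ref{03} is routine, since the data defining $U$ and the $b_i$ are symmetric under $a\leftrightarrow a^{-1}$.
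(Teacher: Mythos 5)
Your proof is correct and follows essentially the same strategy as the paper's: graft $d$ and $e$ onto the given elements so that the resulting commutator has the form $w\cdot a^{\pm1}$ with $w\in U$, invoke Lemma \ref{03}, and use the $[U_0,d]$, $[U_0,e]$ cases to cancel the extra factors in the $X_{U_0}$ case. The only cosmetic differences are your placement of $e$ (the paper expands $[h_1d,eh_2]$, which puts $a=[d,e]$ at the right end without commuting it past other factors) and your explicit appeal to the $a^{-1}$-variant of Lemma \ref{03}, which you justify correctly.
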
 
\begin{proof} Choose $h\in U_0$. To establish the containment  $[B,[U_0,d]]\leq N$ we need to show that $[B,[h,d]]\leq N$.

Write 
\[ [d,eh]^{h^{-1}}=[d,h]^{h^{-1}}[d,e].\]
 Both $[d,h]$ and $[d,e]$ lie in $X$ so $[d,eh]\in B_{n^2}$. Assumption \ref{nice} now guarantees that $[d,eh]\in X$. Denote $[d,h]^{h^{-1}}$ by $u$ and deduce from Lemma \ref{03} that $[B,[d,h]]\leq N$. Since $[B,[d,h]]=[B,[h,d]]$, we conclude that $[B,[U_0,d]]\leq N$. By a symmetric argument $[B,[U_0,e]]\leq N$.

To prove that $[B,X_{U_0}]$ is contained in $N$ choose elements $h_1,h_2\in U_0$ such that $[h_1,h_2]\in X_{U_0}$.  Write 
\[ [h_1d,eh_2]=[h_1,h_2]^d[d,h_2][h_1,e]^{dh_2}[d,e]^{h_2}\]
 and so 
 \[ [h_1d,eh_2]^{h_2^{-1}}=[h_1,h_2]^{dh_2^{-1}}[d,h_2]^{h_2^{-1}}[h_1,e]^d[d,e].\]
  The commutators on the right hand side belong to $X$ so, because of Assumption \ref{nice}, we conclude that $[h_1d,eh_2]\in X$. Denote the product $[h_1,h_2]^{dh_2^{-1}}[d,h_2]^{h_2^{-1}}[h_1,e]^d$ by $v$. Thus, the right hand side of the above equality is $va$. Taking into account that $U_0$ is a normal subgroup, check that $v\in U_0$. By Lemma \ref{03}, $[B,v]\leq[B,a]$. We know that the subgroups  $[B,[U_0,d]]$ and $[B,[U_0,e]]$ are contained in $N$ so it follows that $[B,[h_1,h_2]]\leq N$ and we deduce that $[B,X_{U_0}]\leq N$.
\end{proof} 

Let $i$ be a positive integer. We say that a normal subgroup $L\leq G$ is a special $(i)$-subgroup (or simply $(i)$-subgroup) if the following conditions hold.
\begin{enumerate}
\item $B\leq L$;
\item $X_L\subseteq Z(B)$;
\item $[G:L]\leq i$.
\end{enumerate}

Note that the group $G$ is its own special $(i)$-subgroup for some $i\geq1$ if and only if $B$ is abelian. In this case $G$ is a special $(1)$-subgroup.
\begin{lemma}\label{zabyl}  There is a $(k,n)$-bounded number $j_0$ with the property that $G$ has a normal subgroup $T$ of finite $(k,n)$-bounded order such that $G/T$ possesses a $(j_0)$-subgroup.
\end{lemma}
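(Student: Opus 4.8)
The plan is to build the subgroup $T$ by "collecting" finitely many normal subgroups of $n$-bounded order coming from Lemma \ref{02}, and then show that after factoring out their product the set $X$ of small commutators lands in the centre of $B$. The natural candidate is $N=[B,a]^G$ together with its analogues: since $X_{U_0}$ maps into the centre of $B$ modulo $N$ by Lemma \ref{04}, the only obstruction to $B$ being abelian modulo $N$ comes from commutators in $X$ that are \emph{not} of the form $X_{U_0}$, i.e.\ from the finitely many cosets of $U_0$ in $G$. So I would first fix a transversal $g_1,\dots,g_r$ to $U_0$ in $G$ (with $r$ a $(k,n)$-bounded number, by the remark preceding Lemma \ref{04}), and note that every element of $X$ is a commutator $[x,y]$ with $x,y$ ranging over these cosets.

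Next I would run the argument of Lemma \ref{04} "uniformly" over all pairs of cosets. For each pair $(g_i,g_j)$ the proof of Lemma \ref{04} produces, after suitable rewriting using the identities (1)--(3) and Assumption \ref{nice}, an element of the shape $va$ lying in $X$ with $v$ in a fixed normal subgroup, and hence a containment $[B, \text{(stuff)}] \le [B,a_{ij}]$ for an appropriate $a_{ij}\in X$; applying Lemma \ref{02} to each such $a_{ij}$ we get a normal subgroup $N_{ij}=[B,a_{ij}]^G$ of $n$-bounded order. Setting $T$ to be the product of all these $N_{ij}$ (finitely many, each of $n$-bounded order, each normal in $G$), we obtain a normal subgroup of $(k,n)$-bounded order, and the point is that modulo $T$ every commutator in $X$ becomes central in $B$. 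More precisely, working in $\bar G=G/T$ one checks that $\bar X_{\bar H}\subseteq Z(\bar B)$ whenever $\bar H$ is the image of $U_0$, and then, because $U_0$ has $(k,n)$-bounded index, a further bounded enlargement accounts for the finitely many cosets and yields a genuine special $(j_0)$-subgroup of $\bar G$ with $j_0$ equal to $[G:U_0]$, which is $(k,n)$-bounded.

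There is one subtlety I would need to be careful about: the identities used in Lemma \ref{04} require the various commutators on the right-hand side to lie in $X$, and this in turn relied on Assumptions \ref{nice}--\ref{nice3}. Passing to the quotient $\bar G=G/T$ might destroy those assumptions and also might change $X$ (we could have $X(\bar G)\neq\bar X$). This is exactly what Assumption \ref{nice4} is for: either the assumptions persist in $\bar G$ and $X(\bar G)=\bar X$, in which case the above computation goes through verbatim in $\bar G$, or else $\bar G'$ already has a characteristic subgroup of bounded index with derived subgroup of bounded order, and then Theorem \ref{glav} (and a fortiori the conclusion of the present lemma) holds trivially for $\bar G$ and hence for $G$ after absorbing $T$. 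So the proof splits into these two cases, and only the first requires real work.

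The main obstacle, as I see it, is bookkeeping: making sure that the finitely many normal subgroups $N_{ij}$ one throws into $T$ genuinely suffice to centralise \emph{all} of $X$ in $B$, not just $X_{U_0}$. The clean way to handle this is to observe that every $x\in X$ can be written as $[g_i u, g_j v]$ with $u,v\in U_0$, expand using the standard identities so that the "$U_0$-part" is separated out, and apply Lemma \ref{03} (whose hypothesis $ua\in X$ is verified by an Assumption \ref{nice} argument as in Lemma \ref{04}) to reduce $[B,x]$ modulo the $N_{ij}$'s to something of the form $[B,x']$ with $x'$ depending only on the cosets $g_i,g_j$; finitely many such $x'$ occur, so finitely many $N_{ij}$ do the job. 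Once that is in place, setting $j_0=[G:U_0]$ and $T=\prod_{i,j} N_{i,j}$ completes the proof.
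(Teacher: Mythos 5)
Your overall strategy starts out parallel to the paper's (take $N=[B,a]^G$, invoke Lemma \ref{04} to centralise $X_{U_0}$ in $B$ modulo $N$, and use Assumption \ref{nice4} to deal with the passage to the quotient), but the step where you claim that boundedly many subgroups $N_{ij}$, one per pair of cosets of $U_0$, make \emph{every} element of $X$ central in $B$ modulo their product is a genuine gap, and it is exactly the step that cannot be done at this stage. The reduction mechanism you invoke is not available: Lemma \ref{03} is tailored to the single distinguished element $a=[d,e]$, chosen so that $[B:C_B(a)]$ attains the maximal value $m$, and it only applies to elements $u\in U=C_G(\langle b_1,\dots,b_m\rangle)$ with $ua\in X$ (the proof is a counting argument comparing $(ua)^B$ with the $m$ elements $ua^{b_i}$, which collapses if the anchor does not have maximal class in $B$). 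When you expand $[g_iu,g_jv]$ for arbitrary coset representatives $g_i,g_j$, the ``coset-only'' residue $x'$ is essentially a conjugate of $[g_i,g_j]$, which is an arbitrary commutator of $G$: it need not lie in $X$ (it only lies in $sB_n$ for some $s\in S$), so Lemma \ref{02} gives no bound on $[B,x']^G$, and no analogue of the identity in Lemma \ref{04} produces an element of the form (element of $U_0$) times a fixed element of $X$ of maximal class. So the claimed uniform reduction ``$[B,x]$ modulo the $N_{ij}$'s to $[B,x']$ with $x'$ in a bounded set to which Lemma \ref{02} applies'' does not go through.

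There is also a structural signal that this shortcut cannot work: centralising all of $X$ in $B$ modulo a single bounded normal subgroup would give $B'$ of $(k,n)$-bounded order at once, i.e.\ the full content of Theorem \ref{glav}, making Lemma \ref{blin} and the recursive descent in the proof of the theorem superfluous. The paper's Lemma \ref{zabyl} is deliberately much weaker: it only produces the special subgroup $L=U_0B$. Concretely, one writes $L=U_0\langle c_1,\dots,c_t\rangle$ with $(k,n)$-boundedly many commutators $c_1,\dots,c_t\in X$ (possible because $[G:U_0]$ is bounded and $B=\langle X\rangle$), sets $T=N\cdot\prod_i[B,c_i]^G$ (bounded by Lemma \ref{02}, precisely because the $c_i$ lie in $X$), and checks modulo $T$ that $X_L\subseteq X_{U_0}Z(B)\subseteq Z(B)$ using the $[B,X_{U_0}]$ part of Lemma \ref{04}; the requirement $B\le L$ from the definition of a special subgroup is met by taking $L=U_0B$ rather than $U_0$, which your ``further bounded enlargement'' gestures at but without the key observation that the extra generators can be chosen inside $X$. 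Only $X_L$, not all of $X$, is centralised here; the rest is the job of Lemma \ref{blin}.
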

\begin{proof} Set $L=U_0B$. Since the index of $U_0$ in $G$ is $(k,n)$-bounded, we can find $(k,n)$-boundedly many commutators $c_1,\dots,c_t\in X$ such that $L=U_0\langle c_1,\dots,c_t\rangle$.
 Let $T$ be  the product of the subgroups $N=[B,a]^G$ and $[B,c_i]^G$ for $i=1,\dots,t$. By Lemma \ref{02} each of these subgroups has $n$-bounded order. 
 At $t$ is $(k,n)$-bounded, we  conclude that $T$ has finite $(k,n)$-bounded order. 

Pass to the quotient $G/T$.  Denote the images of $G$, $B$, and $L$ in $G/T$ by same symbols. Taking into account Assumption \ref{nice4}, we simply assume that $T=1$. In that case the commutators $c_1,\dots,c_t$ are contained in $Z(B)$ and therefore $X_L\subseteq X_{U_0}Z(B)$. In view of Lemma \ref{04}, more specifically the statement on $[B,X_{U_0}]$, we have $X_{U_0}\subseteq Z(B)$. Therefore $X_L\subseteq Z(B)$. Obviously the index, say $j_0$, of $L$ in $G$ is $(k,n)$-bounded. The proof is complete.
\end{proof}

\begin{lemma}\label{blin} Suppose that $G$ has a $(j)$-subgroup $L$ for some $j\geq2$. Then $G$ has a normal subgroup $T$ of $(j,k,n)$-bounded order such that $G/T$ possesses a $(j-1)$-subgroup.
\end{lemma}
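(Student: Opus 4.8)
The plan is to lower the index of $L$ by one. If $[G:L]\le j-1$ then $L$ is already a $(j-1)$-subgroup and we may take $T=1$, so assume $[G:L]=j$ and write $\bar G=G/L$, a finite group of order $j$. I would first record that $B$ has nilpotency class at most $2$: since $X$ is closed under conjugation, $B'$ is generated by the commutators $[x_1,x_2]$ with $x_1,x_2\in X$, and each such commutator lies in $B_{n^2}$, hence in $B_n$ by Assumption \ref{nice}, hence (as $X\subseteq B\le L$) in $X_L\subseteq Z(B)$; thus $B'\le Z(B)$. Since $B$ is normal in $G$ it follows that for every $y\in G$ the map $b\mapsto\overline{[b,y]}\in\overline B:=B/Z(B)$ is a homomorphism. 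I would also note that, because any pair $x_1\in X_L$, $x_2\in X$ satisfies $[x_1,x_2]=1$ (as $x_1\in Z(B)$ and $x_2\in B$), one has in fact $B'=\langle\,[x_1,x_2]\mid x_1,x_2\in X\setminus X_L\,\rangle$.

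It suffices to produce a normal subgroup $M$ of $G$ with $B\le M$, $[G:M]\le j-1$, together with a normal subgroup $T$ of $G$ of $(j,k,n)$-bounded order such that $X_M\subseteq Z(B)T$. Indeed, then $T\le M$, and in $G/T$ — where, by Assumption \ref{nice4}, we may assume that Assumptions \ref{nice}--\ref{nice3} persist and $X(G/T)=\bar X$, since otherwise the theorem already holds for $G/T$ and hence (as $|T|$ is bounded) for $G$ — the subgroup $M/T$ satisfies all three conditions defining a $(j-1)$-subgroup. When $\bar G$ has a nontrivial proper normal subgroup $\bar N$, I would take $M$ to be its full preimage in $G$: then $L\le M$ and $[G:M]=[\bar G:\bar N]\le j/2\le j-1$. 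When $\bar G$ is simple — so it is either cyclic of prime order $j$ or non-abelian simple of order $j$ — one is forced to take $M=G$, and the requirement becomes that $B'$ have $(j,k,n)$-bounded order modulo $T$.

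To build $T$ I would analyse $X_M=Comm(M,M)\cap X$. Fix a transversal $1=w_1,\dots,w_p$ of $L$ in $M$, with $p=[M:L]\le j$. Expanding a commutator $[\ell w_i,\ell' w_j]$ ($\ell,\ell'\in L$) via $[xy,z]=[x,z]^y[y,z]$ and $[x,yz]=[x,z][x,y]^z$ shows that every element of $Comm(M,M)$ is a product of a bounded number of conjugates of commutators of the shapes $[w_i,w_j]$, $[\ell,w_j]$, $[w_i,\ell]$ and $[\ell,\ell']$. Commutators $[\ell,\ell']$ that lie in $X$ belong to $X_L\subseteq Z(B)$, so they centralise $B$. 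The heart of the argument is to show that the remaining shapes affect $B$, modulo $Z(B)$, only through boundedly many elements of $X$: using $Comm(B,G)\subseteq X$ (Lemma \ref{comms}), the homomorphisms $b\mapsto\overline{[b,y]}$, the finiteness of $\bar G$, and a careful bookkeeping of which intermediate commutators occurring in these expansions lie in $B_n$, one exhibits $(j,k,n)$-boundedly many commutators $c_1,\dots,c_r\in X$ such that $[B,x]\le\langle\,[B,c_1]^G,\dots,[B,c_r]^G\,\rangle$ for every $x\in X_M$; here one passes from this generating set to the subgroup using $[B,xy]\le[B,x]^G[B,y]^G$ and $[B,x^h]=[B,x]^h$, valid because $X$ is conjugation-closed. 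Then $T=\langle\,[B,c_1]^G,\dots,[B,c_r]^G\,\rangle$ is normal in $G$, has $(j,k,n)$-bounded order by Lemma \ref{02}, and satisfies $X_M\subseteq Z(B)T$. In the case $\bar G$ simple, where $M=G$, the same bookkeeping gives that, writing $K=\langle c_1,\dots,c_r\rangle$, one has $B'\le K'$; since each $c_i$ has at most $n$ conjugates in $G$, $Z(K)$ has $(j,k,n)$-bounded index in $K$, and Schur's theorem bounds $|K'|$, hence $|B'|$; one then takes $T=B'$.

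The main difficulty is exactly the ``heart of the argument'' above. Because the requirement $[G:M]\le j-1$ forces $M$ to be as large as a preimage of a subgroup of $\bar G$, one cannot shrink $L$ to a convenient $C^*_G$-type subgroup as in the proof of Lemma \ref{zabyl}; instead one must show directly that the infinitely many ``new'' commutators appearing in $X_M$, and the action they induce on $B$, collapse modulo $Z(B)$ and modulo a bounded normal subgroup onto the action of boundedly many members of $X$. This is where Assumption \ref{nice}, Lemma \ref{comms}, Lemma \ref{02} and the class-$2$ structure of $B$ all come into play, and the subsidiary case where $\bar G$ is simple — so $M=G$ is forced — is the one in which this estimate must be pushed all the way down to a direct bound on $|B'|$.
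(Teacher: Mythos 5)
Your reduction is set up correctly (if $[G:L]\le j-1$ take $T=1$; otherwise find a normal $M$ with $B\le M$, $[G:M]\le j-1$ and $X_M\subseteq Z(B)T$ for a bounded normal $T$, then invoke Assumption \ref{nice4} to pass to $G/T$), and your observation that $B$ has class at most $2$ under the hypothesis $X_L\subseteq Z(B)$ is sound. But the proof has a genuine gap at exactly the point you label ``the heart of the argument'': the existence of $(j,k,n)$-boundedly many $c_1,\dots,c_r\in X$ with $[B,x]\le\langle [B,c_1]^G,\dots,[B,c_r]^G\rangle$ for \emph{every} $x\in X_M$ is asserted, not proved, and it is the entire content of the lemma. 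For a coset representative $w$ of $L$ in $M$ the family $Comm(L,w)\cap X$ is infinite, and nothing in ``Lemma \ref{comms} plus the homomorphisms $b\mapsto\overline{[b,y]}$ plus bookkeeping'' explains why its action on $B$ modulo $Z(B)$ factors through boundedly many elements of $X$. The paper's mechanism for this collapse is the extremal element $a=[d,e]\in X$ chosen so that $C_B(a)$ has \emph{maximal} index $m$ in $B$: maximality forces any $ua\in X$ with $u\in U$ to have exactly the conjugates $ua^{b_i}$, whence $[B,u]\le[B,a]$ (Lemma \ref{03}), and hence $[B,[U_0,d]]$, $[B,[U_0,e]]$, $[B,X_{U_0}]\le N=[B,a]^G$ (Lemma \ref{04}), with $N$ of bounded order by Lemma \ref{02}. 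You never invoke $a$, $d$, $U_0$, or Lemmas \ref{03}--\ref{04}, and without them there is no engine to produce the $c_i$.

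This omission is tied to your choice of $M$. Taking $M$ to be the preimage of an arbitrary normal subgroup of $G/L$ (or $M=G$ when $G/L$ is simple) gives you no control over which elements are adjoined to $L$; the paper instead adjoins the \emph{specific} element $d$ (forming $J=L\langle d\rangle$) or, if $J$ is not normal, boundedly many conjugates of a commutator $[d,g]$ (forming $K=LA$), precisely because the new commutators $[\ell,d]$ with $\ell\in V=U_0\cap L$ are then controlled by Lemma \ref{04}, and the finitely many remaining cosets of $V$ contribute only boundedly many further generators. Note also that proper containment $L<K$ already gives $[G:K]\le j-1$, so the normal-subgroup structure of $G/L$ and the simple/non-simple dichotomy are unnecessary; worse, in your simple case you are forced to bound $|B'|$ outright in a single step, which is essentially the full strength of Theorem \ref{glav} and is exactly what the one-step-at-a-time descent through $(j)$-subgroups is designed to avoid. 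The claimed chain $B'\le K'$ with $K=\langle c_1,\dots,c_r\rangle$ followed by Schur's theorem is not justified for the same reason: it presupposes the unproved collapse.
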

\begin{proof} First, we note that if $B$ is abelian, then the group $G$ is its own $(j-1)$-subgroup (as well as $(1)$-subgroup) and so the result holds. We therefore assume that $B$ is not abelian.

Recall that $a=[d,e]\in X$ and $C_B(a)$ has maximal possible index in $B$. If both $d$ and $e$ belong to $L$, we conclude (since $X_L\subseteq Z(B)$) that $B$ is abelian, contrary to our assumptions. Thus, assume that at least one of them, say $d$, is not in $L$.

Set $V=U_0\cap L$. By Lemma \ref{04}  $[B,[V,d]]\leq N$, where $N=[B,a]^G$. Let $C=C^*_L(d)=\{l\in L \mid  [l,d]\in X\}$. Observe that $V\leq C$ and let $g_1,\dots,g_t$ be a full system of representatives of the right cosets of $V$ in $C$. Note that $t$ here is $(k,n)$-bounded. The subgroup $[C,d]$ is generated by $[V,d]^{g_1},\dots,[V,d]^{g_t}$ and $[g_1,d],\dots,[g_t,d]$. This is straightforward from the fact that $[vg,d]=[v,d]^g[g,d]$ for any $g,v\in G$.
 Let 
 \[ J=L\langle d\rangle \quad \textrm{ and }  \quad B_0=\langle X_L, Comm(C,d)\rangle.\] 
 Here  $B_0$ is normal in $J$ because so is $\langle X_L\rangle$ and by Lemma \ref{comms} $B$ is central in $L$ modulo $\langle X_L\rangle$. 
  Moreover, note that the order of $L'/B_0$ and the index of $C_{L/B_0}(d)$ in $L/B_0$ are both $(k,n)$-bounded. The fact that the former is bounded is immediate from Lemma \ref{bbbb}. The latter is bounded because so is the index of $C$ in $L$. Therefore by Lemma \ref{511} the index of $B_0$ in $J'$ is $k$-bounded. Hence, we can choose $(k,n)$-boundedly many commutators $c_1,\dots,c_{t'}\in X_J$ such that $\langle X_J\rangle$ is generated by $B_0$ and $c_1,\dots,c_{t'}$.

 Let $R$ be the product of  $N$ and the subgroups $[B,[g_i,d]]^G$ for $i=1,\dots,t$ as well as $[B,c_i]^G$ for $i=1,\dots,t'$. 
 Observe that $X_J$ is central in $B$ modulo $R$. 
 In view of Lemma \ref{02} the order of the normal subgroup $R$ is $(k,n)$-bounded. 
 Moreover, since $J$ contains $L$ properly, the index of $J$ in $G$ is at most $j-1$. Taking into account Assumption \ref{nice4} we conclude that if $J$ is normal in $G$, then the image of $J$ in $G/R$ is a $(j-1)$-subgroup. So in the case where $J$ is normal the lemma is proved.

Now suppose that $J$ is not normal. Choose $g\in G$ such that $[d,g]\not\in J$. Since, by Lemma \ref{bbbb}, the index $[G':B]$ is $k$-bounded, we can choose $k$-boundedly many conjugates of $[d,g]$, say $a_1,\dots,a_{m'}$, such that $L\langle a_1,\dots,a_{m'}\rangle$ is normal. Set $A=\langle a_1,\dots,a_{m'}\rangle$ and $B_1=A\cap B$. Since the index $[A:B_1]$ is $k$-bounded, we conclude that $B_1$ can be generated by $k$-boundedly many elements, say $a'_1,\dots,a'_r$. Then $[B_1,A]$ is generated by $(k,n)$-boundedly many conjugates of the commutators $[a'_i,a_l]$ for $1\leq i\leq r$ and $1\leq l\leq{m'}$, as the commutators $[a'_i,a_l]$ are contained in $X$ by Lemma \ref{comms}. The image of $B_1$ in $A/[B_1,A]$ is central and so by Schur's theorem the commutator subgroup of $A/[B_1,A]$ has $k$-bounded order. We conclude that the subgroup $\langle X_A\rangle$ is generated by $(k,n)$-boundedly many commutators from $X_A$. By Lemma \ref{02} for each of such generators there is a normal subgroup of bounded order, modulo which the element is central in $B$. Therefore $G$ has a normal subgroup $N_1$ of $(k,n)$-bounded order such that $\langle X_A\rangle$ is central in $B$ modulo $N_1$.

Recall that by Lemma \ref{04} $[B,[U_0,d]]\leq N$. Since the subgroups $B,V$, and $N$ are normal in $G$ and since the elements $a_i$ are conjugates of $[d,g]$, it follows that $[B,[V,a_i]]\leq N$ for every $i=1,\dots,{m'}$. More generally, $[B,[V,y]]\leq N$ for every $y\in A$. Let $h_1,\dots,h_{k'}$  be a full system of representatives of the right cosets of $B_1$ in $A$. Here of course $k'$ is $k$-bounded. 
 We claim that for each $i\le k'$ there exists a normal subgroup $R_i$, of $(k,n)$-bounded order, such that the set $Comm(L,h_i)\cap X$ is central in $B$ modulo $R_i$. For shortness, set $h=h_i$ and for each right coset $Vy_j$ of $V$ in $L$ such that $Comm(Vy_j,h)\cap X$ is nonempty choose a representative $y_j$ such that $[y_j,h]\in X$. As $V=U_0\cap L$ has $(k,n)$-bounded index in $L$, there are only $(k,n)$-boundedly many such representatives $y_1,\dots,y_s$.  Define the normal subgroup $R_i$ as the product of $N$ and the subgroups $[B,[y_j,h]]^G$ for $j=1,\dots,s$ and note that in view of Lemma \ref{02} the order of $R_i$ is $(k,n)$-bounded. Let $z\in L$ such that $[z,h]\in X$, then $z=vy_j$ for some $v\in V$ and some $y_j$. Then $[B,[z,h]]=[B,[vy_j,h]]\le [B,[v,h]]^G[B,[y_j,h]]^G\le R_i$.
Therefore $Comm(L,h_i)\cap X$ is central in $B$ modulo $R_i$ and the claim is proved. 

Choose $x\in L$ and $y\in A$ such that 
\[[x,y]\in Comm(L,A)\cap X.\]
Write $y=b h_i$, where $i\leq k'$ and $b\in B_1$. We have $[x,y]=[x,b h_i]=[x, h_i][x, b]^{h_i}$.  
 By Lemma \ref{comms}  $[x,b]\in X$. We deduce from Assumption \ref{nice} that $[x,h_i]\in X$.
 Furthermore, $[x,b]\in Z(B)$ because $X_L\subseteq Z(B)$ while the commutator $[x,h_i]$ is central in $B$ modulo $R_i$. We conclude that the set $Comm(L,A)\cap X$ is central in $B$ modulo $\prod_{i\leq k'} R_i$. Obviously, the order of $\prod_{i\leq k'} R_i$ is $(k,n)$-bounded.

Let  
 \[ E=\langle X_L,X_A,Comm(L,A)\cap X\rangle \quad \textrm{ and } \quad K=LA.\] 
 We claim that $E$ is normal in $K$.
 Indeed, the subgroup generated by $X_L$ is normal n $G$. Moreover, $X_L,X_A$ and $Comm(L,A)\cap X$ are contained in $B$ and, by virtue of Lemma \ref{comms},  $B$ is central in $L$ modulo $\langle X_L\rangle$. Furthermore, the set $Comm(L,A)\cap X$ is closed under conjugation by elements of $A$. Now, let us show that the order of $K'/E$ is $(k,n)$-bounded. Note that $K'=L'[L,A]A'$. Lemma \ref{bbbb} shows that $L'$ and $A'$ both have $(k,n)$-bounded orders modulo $E$ so we only need to check that the order of the image of $[L,A]$ in $K/E$ is $(k,n)$-bounded. Recall that $A=\langle a_1,\dots,a_{m'}\rangle$ and observe that $[L,A]=\prod_{1\leq i\leq m'}[L,a_i]$. 
 As $C^*_L(a_i)$  has  index at most $k$ in  $L$ and its image in $K/E$ is contained in $C_{L/E}(a_iE)$, by Lemma \ref{511} the image of each factor $[L,a_i]$ in $K/E$ has $(k,n)$-bounded order.  
 We conclude that indeed the order of $K'/E$ is $(k,n)$-bounded.

Therefore there are $(k,n)$-boundedly many commutators $x_1,\dots,x_u$ in $X_K$ such that $\langle X_K\rangle$ is generated by $x_1,\dots,x_u$ and
 $E$. 
  We let $T_0$ be  the product of the normal subgroups $N$, $N_1$, all subgroups $R_i$ for $i\leq k'$, and $[B,x_i]^G$ for $i=1,\dots,u$.
   In view of Lemma \ref{02} we observe that the order of $T_0$ is $(k,n)$-bounded. Further,  $X_K$ is central in $B$ modulo $T_0$. Note that, since $K$ contains $L$ properly, the index of $K$ in $G$ is at most $j-1$. Thus, the proof is complete.
\end{proof}

We now have at our disposal all the necessary tools to complete a proof of Theorem \ref{glav}.
\begin{proof} [Proof of Theorem \ref{glav}] In view of Lemma \ref{bbbb} it is sufficient to show that $B'$ has finite $(k,n)$-bounded order. Lemma \ref{zabyl} tells us that there is a $(k,n)$-bounded number $j_0$ with the property that $G$ has a normal subgroup $T_0$ of $(k,n)$-bounded order such that $G/T_0$ possesses a $(j_0)$-subgroup $L_0/T_0$.
 If $L_0=G$, then the image of $B$ in $G/T_0$ is abelian and so $B'\leq T_0$. In this case $B'$ has finite $(k,n)$-bounded order and we have nothing to prove. Therefore assume that $L_0<G$ and $j_0\geq2$. Keeping in mind Assumption \ref{nice4} pass to the quotient $G/T_0$. By Lemma \ref{blin} the group $G/T_0$ has a normal subgroup $T_1/T_0$ of $(j_0,k,n)$-bounded order such that $G/T_1$ possesses a $(j_1)$-subgroup $L_1/T_1$, where $j_1\leq j_0-1$. 
 
 Now we repeat the same argument with $j_1, L_1, T_1$ in place of $j_0, L_0, T_0$ respectively and we continue recursively.   

 Therefore we  find normal subgroups $T_0<T_1<T_2<\dots$, all of bounded order, and special subgroups $L_0<L_1<L_2<\dots$ 
 until, after at most $j_0-1$ steps, we reach the conclusion that $G$ has a normal subgroup $T$ of finite $(k,n)$-bounded order such that $B'\leq T$. This completes the proof.
\end{proof}

\end{document}